\numberwithin{equation}{section}
\theoremstyle{plain}
\newtheorem{thm}{Theorem}[section]
\newtheorem{prop}[thm]{Proposition}
\newtheorem{lem}[thm]{Lemma}
\newtheorem{cor}[thm]{Corollary}
\theoremstyle{remark}
\newtheorem{rem}[thm]{Remark}
\newcommand{\IG}{\operatorname{IG}}
\newcommand{\Var}{\operatorname{Var}}
\newcommand{\mle}{\operatorname{mle}}
\newcommand{\map}{\operatorname{map}}
\newcommand{\mmean}{\operatorname{mean}}
\newcommand{\TV}{\operatorname{TV}}
\newcommand{\R}{\mathbb{R}}
\newcommand{\Ind}{\mathbf{1}}
\newcommand{\mL}{\mathcal{L}}
\newcommand{\mN}{\mathcal{N}}
\newcommand{\mP}{\mathcal{P}}
\newcommand{\mS}{\mathcal{S}}
\newcommand{\mX}{\mathcal{X}}
\begin{document}

\begin{frontmatter}
\title{Bayesian variance estimation in the Gaussian sequence model with partial information on the means}
\runtitle{Bayesian variance estimation}

\begin{aug}
\author{\fnms{Gianluca} \snm{Finocchio}\thanksref{a,e1}\ead[label=e1,mark]{g.finocchio@utwente.nl}}
\and
\author{\fnms{Johannes} \snm{Schmidt-Hieber}\thanksref{a,e2}\ead[label=e2,mark]{a.j.schmidt-hieber@utwente.nl}}

\address[a]{University of Twente,
\printead{e1,e2}}

\runauthor{G. Finocchio and J. Schmidt-Hieber}

\affiliation{University of Twente}

\end{aug}

\begin{abstract}
Consider the Gaussian sequence model under the additional assumption that a fixed fraction of the means is known. We study the problem of variance estimation from a frequentist Bayesian perspective. The maximum likelihood estimator (MLE) for $\sigma^2$ is biased and inconsistent. This raises the question whether the posterior is able to correct the MLE in this case. By developing a new proving strategy that uses refined properties of the posterior distribution,  we find that the marginal posterior is inconsistent for any i.i.d. prior on the mean parameters. In particular, no assumption on the decay of the prior needs to be imposed. Surprisingly, we also find that consistency can be retained for a hierarchical prior based on Gaussian mixtures. In this case we also establish a limiting shape result and determine the limit distribution. In contrast to the classical Bernstein-von Mises theorem, the limit is non-Gaussian. We show that the Bayesian analysis leads to new statistical estimators outperforming the correctly calibrated MLE in a numerical simulation study.
\end{abstract}

\begin{keyword}
\kwd{frequentist Bayes}
\kwd{maximum likelihood}
\kwd{semiparametric inference}
\kwd{Gaussian sequence model}
\kwd{Bernstein-von Mises theorems}
\end{keyword}

\end{frontmatter}

\section{Introduction}

For given $0\leq \alpha \leq 1,$ suppose we observe $n$ independent and normally distributed random variables
\begin{align} \label{model}
X_i \sim \mathcal{N}\big( \mu_i^0 \mathbf{1}(i > n\alpha), \sigma_0^2\big), \quad i=1, \ldots, n.
\end{align}
The parameters in the model are $\mu_i^0,$ $i> n\alpha$ and $\sigma_0>0.$ The goal is to estimate the variance $\sigma_0^2$ while treating the mean vector $\mu_0 := (\mu_{\lceil n\alpha \rceil}^0,\dots,\mu_n^0)$ as nuisance. For $\alpha=0,$ we recover the Gaussian sequence model. For $\alpha >0,$ this can be viewed as the Gaussian sequence model with additional knowledge that the means of the first $\lfloor n\alpha \rfloor$ observations are known (in which case we can subtract them from the data).

One can think of model \eqref{model} as a simple prototype of a combined dataset. Using for instance different measurement devices, one often faces merged datasets collected from multiple sources. The different sources might not be of the same quality concerning the underlying parameter, see \cite{Meijer2012} for an example. An alternative viewpoint is to interpret model \eqref{model} as a sparse sequence model with known support. Since a $(1-\alpha)$-fraction of the data is perturbed, we are in the dense regime. Knowledge of the support is then crucial as otherwise there is no consistent estimator for $\sigma_0^2.$

If $n$ is even and $\alpha=1/2,$ then \eqref{model} is equivalent to the Neyman-Scott model \cite{NS48} up to a reparametrization. Model \eqref{model} is in this case equivalent to observing $U_i:=(X_{n/2+i}+X_i)$ and $V_i:=(X_{n/2+i}-X_i)$ for $i=1, \ldots, n/2.$ Since $U_i$ and $V_i$ are independent, this is thus equivalent to observing independent random variables $U_i, V_i \sim \mN( \mu_{n/2+i}^0, \widetilde \sigma_0^2),$ with $\widetilde \sigma_0^2=2\sigma_0^2.$ Estimation of $\widetilde \sigma_0^2$ in the latter model is known as Neyman-Scott problem.

Although $\sigma_0^2$ can be estimated with parametric rate based on the first $n\alpha$ observations, a striking feature of the model is that the MLE for $\sigma_0^2$ is inconsistent. In fact the MLE $\widehat \sigma^2_{\mle}$ converges to $\alpha \sigma_0^2$ therefore underestimating the true variance by the factor $\alpha.$ The reason is that the likelihood of the observations with non-zero mean significantly affects the total likelihood viewed as a function in $\sigma^2.$

We study what happens when a Bayesian approach is implemented for the estimation of the variance and whether the  posterior distribution can correct for the bias of the MLE. The Bayesian method can be viewed as a weighted likelihood method: instead of taking the parameter with the largest likelihood, the posterior puts mass on parameter sets with large likelihood. Because of this, the posterior can in some cases correct the flaws of the MLE. An example are irregular models, see \cite{ghosal1995, chernozhukov2004, RSH17}.

In the first part of the paper, we prove that whenever the nuisances are independently generated from a proper distribution, the posterior does not contract around the true variance. This shows that, for a large class of natural priors, the Bayesian method is unable to correct the MLE. In frequentist Bayes, several lower bound techniques have been developed in order to describe when Bayesian methods do not work, \cite{MR2471287, MR3059077, MR3375874, MR3486423, CM18, HRSH15}. These results can be used for instance to show that a certain decay of the prior is necessary to ensure posterior contraction. Our lower bounds are of a different flavor and do not require a condition on the tail behavior. 

Since for the non-zero means no additional structure is assumed, there is no way to say something about one mean from knowledge of all the other means. Therefore, one might be tempted to think that a correlated prior on the means cannot perform better than an i.i.d. prior and consequently must lead to an inconsistent posterior as well. Surprisingly, this is not true and we construct in the second part of the article a Gaussian mixture prior for which the posterior contracts with the parametric rate around the true variance. For this prior we derive the limit distribution in the Bernstein-von Mises sense. In contrast with the classical Bernstein-von Mises theorem, the posterior limit is non-Gaussian in the case of small means. In this case the posterior also incorporates information about the second part of the sample into the estimator and we show in a simulation study that the maximum a posteriori estimate based on the limit distribution outperforms the $\sqrt{n}$-consistent estimator that only uses the observations with zero mean. 

Estimation of the variance in model \eqref{model} can also be interpreted as a semiparametric problem. The results in this article therefore contribute to the recent efforts to understand frequentist Bayes in semiparametric models. Semiparametric Bernstein-von Mises theorems are derived under various conditions in \cite{MR1947282, MR2875753, BK12, MR3405597}. For specific priors, it has been observed that there can be a large bias in the posterior limit, see \cite{MR3021557, MR3405597, RSH17}. In all the cases studied so far, it is unclear whether the bias is due to the specific choice of prior or whether this is a fundamental limitation of the Bayesian method. To the best of our knowledge, our results show for the first time, that the posterior can be inconsistent for all natural priors. 

Related to model \eqref{model}, \cite{MR3020419} studies Bayes for variance estimation of the errors in the nonparametric regression model. It is shown that if the posterior contracts around the true regression function with rate $o(n^{-1/4}),$ the marginal posterior for the variance contracts with parametric rate around the true error variance and a Bernstein-von Mises result holds.

The article is organized as follows. In Section \ref{sec.liklh}, we discuss aspects of the problem related to the likelihood and the posterior distribution. A crucial identity for the log-posterior is derived in Section \ref{sec.log_post}. This leads then to the general negative result in Section \ref{sec.prod_priors}. The Gaussian mixture prior with parametric posterior contraction is constructed in Section \ref{sec.Gaussian_mix}. This section also contains the limiting shape result and a numerical simulation study. All proofs are deferred to the appendix.

{\em Notation:} For a vector $u=(u_1, \ldots, u_k)$, we write $\|u\|^2 = \sum_{i=1}^k u_i^2$ and $\overline{u^2}=\|u\|^2/k$ for the averages of the squares (not to be confused with the squared averages). We write $n_1=\lfloor n\alpha\rfloor$ and $n_2=n-n_1$. The probability and expectation induced by model \eqref{model} are denoted by $P_0^n$ and $E_0^n.$

\section{Likelihood and posterior}\label{sec.Like_MLE_Bpost}
\label{sec.liklh}

{\bf The MLE.} For the subsequent analysis, it is convenient to split the data vector $X=(X_1, \ldots,X_n)$ in the part with zero means $Y=(X_1, \ldots, X_{n_1})$ and the observations with non-zero means $Z=(X_{n_1+1}, \ldots, X_n)$ such that $X=(Y,Z).$ The likelihood function of the model is
\begin{equation}\label{likelihood_old}
\begin{split}
L\big(\sigma^2,\mu \big| Y,Z\big) 
& = \underbrace{\frac{1}{(2\pi\sigma^2)^{n_1/2}} e^{- \frac{\|Y\|^2}{2\sigma^2}}}_{L(\sigma^2,\mu|Y) } \underbrace{\frac{1}{(2\pi\sigma^2)^{n_2/2}} e^{- \frac{ \|Z-\mu \|^2}{2\sigma^2}}}_{L(\sigma^2,\mu |Z) } \\
&= \frac{1}{(2\pi\sigma^2)^{n/2}} e^{- \frac{\|Y\|^2 + \|Z-\mu \|^2}{2\sigma^2}}.
\end{split}
\end{equation}
Maximizing over $(\sigma^2,\mu)$ yields the MLE
\begin{equation}\label{MLE}
\begin{split}
\big(\widehat{\sigma}_{\mle}^2, \widehat \mu_{\mle}\big) = \Big(\frac{\|Y\|^2}{n}, Z\Big). 
\end{split}
\end{equation}
If only based on the subsample $Y,$ the MLE for $\sigma_0^2$ would be $\|Y\|^2/n_1$ and this converges to $\sigma_0^2$ with the parametric rate $n^{-1/2}.$ Hence $\|Y\|^2/n$ converges to $\alpha \sigma_0^2.$ The MLE for $\sigma_0^2$ is therefore inconsistent and misses the true parameter $\sigma_0^2$ by a factor $\alpha.$ It is clear that there is very little extractable information about the parameter $\sigma_0^2$ in $Z.$ A frequentist estimator can simply discard $Z$ and only use the subsample $Y.$ The MLE also does this but leads to an incorrect scaling of the estimator. 

The incorrect scaling factor of the MLE can be explained in different ways. One interpretation is that the MLE can be written as
\begin{align*}
\widehat{\sigma}_{\mle}^2 = \frac{n_1}{n} \widehat{\sigma}_{Y,\mle}^2 + \frac{n_2}{n}  \widehat{\sigma}_{Z,\mle}^2,
\end{align*}
with $\widehat{\sigma}_{Y,\mle}^2= \|Y\|^2/n_1$ the MLE based on the subsample $Y$ and $ \widehat{\sigma}_{Z,\mle}^2=0$ the MLE based on the subsample $Z.$ The fact that the overall MLE just forms a linear combination of the MLEs for the subsamples shows again that too much weight is given to $Z.$

Another explanation for the incorrect scaling of the MLE is to observe that in \eqref{likelihood_old} the likelihood based on the second subsample is $L(\sigma^2,\mu |Z) \propto  \sigma^{-n_2}$ if $\mu =\widehat \mu_{\mle}.$  If we would take the likelihood only over the first part of the sample $Y$ we would obtain the optimal estimator $\|Y\|^2/n_1,$ but since the likelihhod over the full sample is the product of the likelihood functions for $Y$ and $Z,$ an additional factor $\sigma^{-n_2}$ occurs in the overall likelihood which leads to the incorrect scaling.  More generally, we conjecture that likelihood methods do not perform well for combined datasets where one part of the data is informative about a parameter and the other part is affected by nuisance parameters.

{\bf Adjusted profile likelihood.} For the profile likelihood, we first compute the maximum likelihood estimator of the nuisance parameter for fixed $\sigma^2,$ denoted by, say $\widehat \mu_{\sigma^2},$ and then maximize
\begin{align*}
\sigma^2 \mapsto L\big(\sigma^2,\widehat \mu_{\sigma^2} \big| Y,Z\big).
\end{align*}
Obviously $\widehat \mu_{\sigma^2} = Z$ for any $\sigma^2>0$ and the profile likelihood estimator coincides with the MLE for $\sigma^2$ in the Neyman-Scott problem. If the parameter of interest and the nuisance parameters are orthogonal with respect to the Fisher information, that is, 
\begin{align}
E\Big[ \frac{\partial^2}{\partial \sigma^2 \partial \mu_j} \log L\big(\sigma^2,\mu \big| Y,Z\big) \Big] =0, \quad \text{for all } j
\label{eq.orthogonality}
\end{align}
the adjusted profile likelihood estimator \cite{MR893334, MR1064420, MR1224410} is the maximizer of 
\begin{align}
\sigma^2 \mapsto \mL(\sigma^2):=\det\big( M(\sigma^2, \widehat \mu_{\sigma^2})  \big)^{-1/2} L\big(\sigma^2,\widehat \mu_{\sigma^2} \big| Y,Z\big)
\label{eq.adjusted_likelihood}
\end{align}
for the matrix valued function
\begin{align*}
M(\sigma^2, \mu) := \Big( - \frac{\partial^2}{\partial \mu_j \partial \mu_\ell} \log L\big(\sigma^2,\mu \big| Y,Z\big) \Big)_{j,\ell}
\end{align*}
and $\det()$ the determinant. It is easy to check that \eqref{eq.orthogonality} holds for model \eqref{model}. Since $-\partial^2/(\partial \mu_j \partial \mu_\ell) \ \log L\big(\sigma^2,\mu \big|Y,Z\big) = \sigma^{-2}\mathbf{1}(j=\ell),$ the adjusted profile likelihood estimator for $\sigma^2$ coincides with the MLE for the subsample $Y,$
\begin{align*}
\widehat \sigma^2 = \frac{\|Y\|^2}{n_1}.
\end{align*}
In particular, the adjusted profile likelihood results in an unbiased $\sqrt{n}$-consistent estimator for $\sigma^2.$

{\bf The posterior distribution.} From a Bayesian perspective it is quite natural to draw $\sigma^2$ and the mean vector $\mu$ from independent distributions. Due to the orthogonality with respect to the Fisher information \eqref{eq.orthogonality}, we expect no strong interactions of $\sigma^2$ and the mean parameters in the likelihood that could be taken care of by a dependent prior. Suppose that $\mu \sim \nu$ and that the prior for $\sigma^2$ has Lebesgue density $\pi.$ The marginal posterior distribution is then given by Bayes formula
\begin{equation}\label{marginal_posterior}
\begin{split}
\pi\big(\sigma^2 \big| Y,Z \big) &= \frac{ L(\sigma^2|Y,Z) \pi(\sigma^2)}{\int_{\R_+}  L(\sigma^2|Y,Z) \pi(\sigma^2) \, d\sigma^2},
\end{split}
\end{equation}
with
\begin{equation}\label{marginal_like}
\begin{split}
L(\sigma^2|Y,Z) =  \sigma^{-n} e^{- \frac{\|Y\|^2}{2\sigma^2}} \Big(\int_{\R^n} e^{- \frac{\|Z-\mu\|^2}{2\sigma^2}}d\nu(\mu)\Big).
\end{split}
\end{equation}
In \cite{sweeting} it has been argued that by using multivariate Laplace approximation, 
\begin{align}\label{eq.sweeting_argument}
\begin{split}
L(\sigma^2|Y,Z) = \mL(\sigma^2) \nu\big( \widehat \mu_{\sigma^2} \big) \big(1+ O_P(n^{-1})\big)
= \mL(\sigma^2) \nu\big( Z \big) \big(1+ O_P(n^{-1})\big),
\end{split}
\end{align}
with $\mL(\sigma^2)$ the adjusted profile likelihood in \eqref{eq.adjusted_likelihood}. This suggests that the posterior distribution should be centered around the adjusted profile likelihood estimator $\|Y\|^2/n_1,$ therefore correcting the MLE. 

{\bf Associated sequence model with random means.} For the Gaussian sequence model with partial information \eqref{model} equipped with the product prior $\pi \otimes \nu,$  define the {\em associated sequence model with random means,} where we observe independent random variables
\begin{align}
Y_i \sim \mN(0, \sigma_0^2), \  i=1,\ldots, n_1 \ \ \text{and} \ \  Z_i |\mu \sim\mN(\mu_i,\sigma_0^2), \ i=n_1+1, \ldots, n,
\label{eq.model2}
\end{align}
with $\mu \sim \nu$ and $\nu$ known. In this model, the nuisance parameters are replaced by additional randomness. The only parameter in this model is $\sigma_0^2$ and the model is therefore parametric.  

\begin{rem}
	\label{rem.marg_lik}
	The likelihood function of model \eqref{eq.model2} is $L(\sigma^2|Y,Z).$ Model \eqref{model}  and model \eqref{eq.model2} lead therefore to the same formula for the posterior distribution of $\sigma^2$ in terms of $Y,Z.$
\end{rem}

{\bf Bayes with improper uniform prior.}  If the prior on the mean vector in the Bayes formula is chosen as the Lebesgue measure, the formula for the posterior simplifies to 
\begin{equation}\label{unif post}
\begin{split}
\pi\big(\sigma^2 \big| Y,Z \big) &\propto  \sigma^{-n_1} e^{- \frac{\|Y\|^2}{2\sigma^2}}\pi(\sigma^2).
\end{split}
\end{equation}
This is the same posterior we would get if we discarded the subsample $Z.$ It follows from the parametric Bernstein-von Mises theorem that if $\pi$ is positive and continuous in a neighbourhood of $\sigma_0^2,$ the posterior contracts around the true variance $\sigma_0^2.$ Notice that in the case of uniform prior, the Laplace approximation in \eqref{eq.sweeting_argument} is exact and does not involve any remainder terms. Obviously the Lebesgue measure is not a probability measure and the prior is improper. This raises then the question whether there are also proper priors for which the marginal posterior is consistent on the whole parameter space. We will address this problem in the next sections. 

\section{On the derivative of the log-posterior}
\label{sec.log_post}
We first derive a differential equation for the posterior. Denote by $\mu | (Z, \sigma^2)$ the posterior distribution of $\mu$ for the sample $Z,$ that is,
\begin{align}\label{mu post}
d\Pi(\mu | Z, \sigma^2) = \frac{e^{-\frac{\|Z-\mu\|^2}{2\sigma^2}} d\nu(\mu)}{\int_{\R^n} e^{-\frac{\|Z-\mu\|^2}{2\sigma^2}} d\nu(\mu)}.
\end{align}
In particular, we set
\begin{align}\label{Var term}
V\big(\mu | (Z, \sigma^2) \big) :=  \int_{\R^n} \|Z-\mu\|^2 d\Pi(\mu | Z, \sigma^2).
\end{align}
The quantity $V(\mu | (Z, \sigma^2))$ measures the spread of $\Pi(\mu | Z, \sigma^2)$ around the vector $Z.$ Recall moreover the definition of $L(\sigma^2|Y,Z)$ in \eqref{marginal_like}.

\begin{prop}\label{prop_PDE}
	The marginal posterior satisfies
	\begin{equation}\label{PDE_marg_like}
	\begin{split}
	\partial_{\sigma^2} \log \frac{\pi(\sigma^2|Y,Z)}{\pi(\sigma^2)} =  \partial_{\sigma^2} \log L(\sigma^2|Y,Z) =  \frac{\|Y\|^2+ V(\mu | (Z, \sigma^2) )}{2\sigma^4} - \frac{n}{2\sigma^2}.
	\end{split}
	\end{equation}
\end{prop}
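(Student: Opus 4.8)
The plan is to reduce the entire statement to one explicit differentiation. The first equality in \eqref{PDE_marg_like} is essentially free: by Bayes' formula \eqref{marginal_posterior},
\[
\log\frac{\pi(\sigma^2|Y,Z)}{\pi(\sigma^2)} = \log L(\sigma^2|Y,Z) - \log\int_{\R_+} L(s|Y,Z)\,\pi(s)\,ds,
\]
and the second term on the right is the normalizing constant, which does not depend on $\sigma^2$. Its $\partial_{\sigma^2}$-derivative therefore vanishes and the two derivatives coincide. Everything then comes down to computing $\partial_{\sigma^2}\log L(\sigma^2|Y,Z)$.

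For the second equality I would start from the explicit expression \eqref{marginal_like} and take logarithms,
\[
\log L(\sigma^2|Y,Z) = -\frac{n}{2}\log\sigma^2 - \frac{\|Y\|^2}{2\sigma^2} + \log\int_{\R^n} e^{-\frac{\|Z-\mu\|^2}{2\sigma^2}}\,d\nu(\mu).
\]
The first two summands are elementary: differentiating in $\sigma^2$ gives $-n/(2\sigma^2)$ and $+\|Y\|^2/(2\sigma^4)$, which already produces the $\|Y\|^2/(2\sigma^4)-n/(2\sigma^2)$ part of the claim. It remains to handle the logarithm of the integral. Writing $I(\sigma^2):=\int_{\R^n} e^{-\|Z-\mu\|^2/(2\sigma^2)}\,d\nu(\mu)$ and differentiating under the integral sign, one uses $\partial_{\sigma^2}e^{-\|Z-\mu\|^2/(2\sigma^2)} = \frac{\|Z-\mu\|^2}{2\sigma^4}e^{-\|Z-\mu\|^2/(2\sigma^2)}$ to obtain
\[
\partial_{\sigma^2}\log I(\sigma^2) = \frac{1}{2\sigma^4}\cdot\frac{\int_{\R^n}\|Z-\mu\|^2 e^{-\|Z-\mu\|^2/(2\sigma^2)}\,d\nu(\mu)}{\int_{\R^n} e^{-\|Z-\mu\|^2/(2\sigma^2)}\,d\nu(\mu)}.
\]
By the definition of the posterior $\Pi(\mu | Z,\sigma^2)$ in \eqref{mu post}, the ratio on the right is exactly $\int_{\R^n}\|Z-\mu\|^2\,d\Pi(\mu | Z,\sigma^2)=V(\mu | (Z,\sigma^2))$ as in \eqref{Var term}. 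Adding the three contributions gives the asserted identity.

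The main obstacle, and the only nontrivial point, is justifying the interchange of $\partial_{\sigma^2}$ and the integral defining $I$. I would fix a compact subinterval $[a,b]\subset(0,\infty)$ and apply the standard differentiation-under-the-integral criterion via dominated convergence: on $[a,b]$ the derivative of the integrand is bounded in absolute value by $\frac{\|Z-\mu\|^2}{2a^4}\,e^{-\|Z-\mu\|^2/(2b)}$, uniformly in $\sigma^2\in[a,b]$. Since the map $t\mapsto t\,e^{-t/(2b)}$ attains its maximum $2b/e$ on $[0,\infty)$, this dominating function is bounded by the finite constant $b/(e\,a^4)$, hence $\nu$-integrable because $\nu$ is a probability measure. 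This legitimizes differentiation under the integral, and as $[a,b]$ is arbitrary the identity holds for every $\sigma^2>0$. Positivity of $I(\sigma^2)$, needed for $\log I$ to be well defined and for the ratio to make sense, is immediate from $\nu$ being a probability measure.
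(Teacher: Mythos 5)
Your proof is correct and follows essentially the same route as the paper's: differentiate $\log L(\sigma^2|Y,Z)$ directly and identify the ratio of integrals with $V(\mu\,|\,(Z,\sigma^2))$ via the definition of $\Pi(\mu\,|\,Z,\sigma^2)$. The only difference is that you explicitly justify differentiating under the integral sign with a dominated-convergence bound (which is valid: the bound $t e^{-t/(2b)}\leq 2b/e$ gives a constant dominating function, integrable since $\nu$ is a probability measure), whereas the paper states this step as a direct computation without further comment.
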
 
By Remark \ref{rem.marg_lik}, the right hand side is a closed-form expression of the score function for $\sigma^2$ in the random means model \eqref{eq.model2}. If the MLE in \eqref{eq.model2} does not lie on the boundary, the score function vanishes at the MLE. From the Bernstein-van Mises phenomenon it is conceivable that the posterior will concentrate around this MLE. For the MLE to be close to the truth $\sigma_0^2,$ the score function evaluated at $\sigma_0^2$ must be $o_P(1).$ Since $\|Y\|^2=n\alpha \sigma_0^2 +O_P(\sqrt{n}),$ this leads to the condition 
\begin{align*}
\frac{V(\mu | (Z, \sigma_0^2) )}{n} = (1-\alpha) \sigma_0^2 +o_P(1).
\end{align*}
In the next section, we derive a very general negative result. The main part of the argument is to show that the previous equality does not hold in a neighborhood of $\sigma_0^2,$ see \eqref{eq.shown}.

\section{Posterior inconsistency for product priors}
\label{sec.prod_priors}

In this section we study posterior contraction under the following condition.

{\bf Prior.} The prior on $\mu$ is independent of the prior on $\sigma^2.$ Under the prior, each component of the mean vector $\mu$ is drawn independently from a distribution $\nu$ on $\mathbb{R}.$ The prior on $\sigma^2$ has a positive and continuously differentiable Lebesgue density on $\mathbb{R}_+.$

So far, $\nu$ denoted the prior on the mean vector. By a slight abuse of language we denote the prior on the individual components also by $\nu.$ The assumptions on the prior are mild enough to account for proper priors with heavy tails and possibly no moments. 

The i.i.d. prior is the natural choice, if we believe that there is no structure in the non-zero means. From \eqref{eq.model2} it follows that the corresponding sequence model with random means is
\begin{align}
Y_i \sim \mN(0, \sigma_0^2), \  i=1,\ldots, n_1 \ \text{and} \  Z_i |\mu_i \sim\mN(\mu_i,\sigma_0^2), \ i=n_1+1, \ldots, n,
\label{eq.model2_iid}
\end{align}
with $\mu_i \sim \nu.$ For $\alpha=1/2$ and unknown $\nu,$ this model has been studied in \cite{KW56}. It is shown that the MLE for $\sigma_0^2$ and the MLE for the distribution function of the means are consistent. Since the random means model leads to the same posterior distribution as explained in Remark \ref{rem.marg_lik}, this suggests that the posterior might concentrate around the truth.

We now provide a second heuristic that leads to a different conclusion indicating that it makes a huge difference whether the distribution of the means $\nu$ is known or unknown. In the framework of \eqref{eq.model2_iid}, $\nu$ is known. If $\int u^2 d\nu(u)< \infty,$ then $\overline{\mu^2} = \int u^2 d\nu(u) + O_P(n^{-1/2})$ and $\overline{Z^2} =\overline{\mu^2}+\sigma_0^2+O_P(n^{-1/2}),$ so we have $\overline{Z^2}- \int u^2 d\nu(u)= \sigma_0^2+O_P(n^{-1/2}).$ This means that model \eqref{eq.model2_iid} carries a lot of information about $\sigma_0^2$ in the sense that $\sigma_0^2$ can be estimated with parametric rate from the subsample $Z$ only. Since the posterior only sees model \eqref{eq.model2_iid} it is therefore natural to give a lot of weight to the subsample $Z$ as well, which, from a frequentist perspective, is wrong. 

This heuristic does not say anything about heavy-tailed priors with $\int u^2 d\nu(u) =\infty.$ But even in this case, we will show that the posterior is inconsistent. The first result states that in a neighborhood of $\sigma_0^2$ the posterior is increasing extremely fast with high probability. 

\begin{prop}\label{log post bound}
	Given $\alpha <1$  and the prior above, then, for all sufficiently large $\sigma_0^2,$ there exists a mean vector $\mu_0,$ such that
	\begin{equation*}
	\begin{split}
	\lim_{n\to\infty} P_0^n\Big(\Big\{\partial_{\sigma^2}\log \pi(\sigma^2|Y,Z) \geq \sigma_0^{-2}n, \ \forall \sigma^2\in \Big[\frac{\sigma_0^2}2,2\sigma_0^2\Big]\Big\}\Big) = 1.
	\end{split}
	\end{equation*}
\end{prop}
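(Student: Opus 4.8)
The plan is to start from the closed-form score identity in Proposition~\ref{prop_PDE} and reduce the statement to a uniform lower bound on the posterior spread $V(\mu\mid(Z,\sigma^2))$. Writing
\[
\partial_{\sigma^2}\log\pi(\sigma^2\mid Y,Z)=\frac{\pi'(\sigma^2)}{\pi(\sigma^2)}+\frac{\|Y\|^2+V(\mu\mid(Z,\sigma^2))}{2\sigma^4}-\frac{n}{2\sigma^2},
\]
I would note that on the compact interval $[\sigma_0^2/2,2\sigma_0^2]$, which is bounded away from $0$, the term $\pi'/\pi$ is bounded by a constant $C_0=C_0(\sigma_0^2,\pi)$ since $\pi$ is positive and continuously differentiable; moreover $\|Y\|^2\ge0$, $\sigma^4\le4\sigma_0^4$ and $\sigma^2\ge\sigma_0^2/2$. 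Hence it suffices to show that, with $P_0^n$-probability tending to $1$, $V(\mu\mid(Z,\sigma^2))\ge C\,n\sigma_0^2$ simultaneously for all $\sigma^2$ in the interval, for a suitable absolute constant $C$ (e.g. $C=17$). Indeed this makes the positive term at least $C\,n/(8\sigma_0^2)$ while the negative term is at most $n/\sigma_0^2$, leaving a surplus exceeding $\sigma_0^{-2}n$ once $n$ dominates the fixed constant $C_0$.

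Under the product prior the posterior for $\mu$ factorizes, so $V(\mu\mid(Z,\sigma^2))=\sum_{i=n_1+1}^n V_i$ with $V_i=\int(Z_i-\mu_i)^2\,d\Pi(\mu_i\mid Z_i,\sigma^2)$. The construction I would use is to take all coordinates of $\mu_0$ equal to $b=c\,\sigma_0$ for a large constant $c$ fixed at the end, so that $Z_i=c\sigma_0+\sigma_0\xi_i$ with $\xi_i$ i.i.d.\ $\mN(0,1)$. The crucial per-coordinate estimate is that, on the event $\{|\xi_i|\le1\}$ (so that $Z_i\in[(c-1)\sigma_0,(c+1)\sigma_0]$), one has $V_i\ge\tfrac18(c-1)^2\sigma_0^2$ uniformly over $\sigma^2\in[\sigma_0^2/2,2\sigma_0^2]$, provided $\sigma_0$ is large enough. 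Since this is a deterministic consequence of $\{|\xi_i|\le1\}$ and is uniform in $\sigma^2$, I would then finish by a Hoeffding bound: the number of indices with $|\xi_i|\le1$ is at least $\tfrac12 p\,n_2\gtrsim p(1-\alpha)n$ with $P_0^n$-probability $\to1$, where $p=\P(|\mN(0,1)|\le1)$; as every $V_i\ge0$, this gives $V(\mu\mid(Z,\sigma^2))\gtrsim p(1-\alpha)(c-1)^2\,n\sigma_0^2$ for all $\sigma^2$ at once, and choosing $c$ large (depending only on $\alpha$) makes the constant exceed $C$.

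The heart of the argument, and the step I expect to be the main obstacle, is the per-coordinate bound, because it must hold for \emph{every} proper prior $\nu$ — including heavy-tailed ones, where one might fear the posterior simply tracks $Z_i$ and keeps $V_i$ of order $\sigma^2$. The mechanism I would exploit is that the scaling $b$ of order $\sigma_0$ forces the data far from the bulk of $\nu$ while keeping the relevant Gaussian exponent of order one. Concretely, fix $M$ with $\nu([-M,M])\ge1/2$ and bound, for $Z_i>0$,
\[
V_i\ \ge\ \Big(\tfrac{Z_i}{2}\Big)^2\,\Pi\big(\{\mu\le Z_i/2\}\mid Z_i,\sigma^2\big),
\]
using $(Z_i-\mu)^2\ge(Z_i/2)^2$ for $\mu\le Z_i/2$. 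It remains to show the complementary mass $\Pi(\{\mu>Z_i/2\}\mid Z_i,\sigma^2)$ is negligible. The numerator $\int_{Z_i/2}^\infty e^{-(Z_i-\mu)^2/(2\sigma^2)}d\nu(\mu)\le\nu((Z_i/2,\infty))$ tends to $0$ because $Z_i/2\ge(c-1)\sigma_0/2\to\infty$ and $\nu$ is proper, while the normalizing denominator is bounded below by the bulk contribution $\int_{[-M,M]}e^{-(Z_i-\mu)^2/(2\sigma^2)}d\nu\ge\tfrac12 e^{-(Z_i+M)^2/\sigma_0^2}$, which stays bounded away from $0$ uniformly on the interval since $(Z_i+M)^2/\sigma_0^2$ is of order $(c+1)^2$. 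Both bounds are uniform in $\sigma^2\in[\sigma_0^2/2,2\sigma_0^2]$ — the numerator bound is $\sigma^2$-free, the denominator is minimized at $\sigma^2=\sigma_0^2/2$ — so for $\sigma_0$ large the complementary mass drops below $1/2$ and $V_i\ge\tfrac18(c-1)^2\sigma_0^2$. The delicate points are the order of the quantifiers (fix $c$ from $\alpha$, then require $\sigma_0$ large depending on $c$ and on the tail of $\nu$, then let $n\to\infty$) and verifying that ``far from the bulk yet exponent of order one'' is simultaneously achievable — which is exactly what the choice $b=c\sigma_0$ secures, and is where the hypothesis of large $\sigma_0^2$ is used.
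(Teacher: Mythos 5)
Your proposal is correct, and while it shares the paper's overall skeleton (start from the identity of Proposition \ref{prop_PDE}, absorb the $\pi'/\pi$ term on the compact interval, and reduce everything to a uniform lower bound on $V(\mu\mid(Z,\sigma^2))$ of order $n\sigma_0^2$ with a sufficiently large constant), the key shrinkage estimate is executed by a genuinely different mechanism. The paper expands $\|Z-\mu\|^2$ and lower bounds $V$ by $\|Z\|^2$ minus a cross term $\sum_i |Z_i|\int_{|\mu|\le 2|Z_i|}|\mu|\,d\Pi(\mu\mid Z_i,\sigma^2)$, which it controls through a two-level case split ($|Z_i|$ versus $R$, then $|\mu|$ versus $\sigma_0$) with the signal strength $R\asymp\sigma_0\sqrt{\log(1/Q(\sigma_0))}$ calibrated so that the tilting factor $e^{6R^2/\sigma_0^2}$ exactly cancels the tail ratio $Q(\sigma_0)=\nu([-\sigma_0,\sigma_0]^c)/\nu([-\sigma_0,\sigma_0])$; it then concludes with Chebyshev applied to $\|Z\|^2$ and $\|Z-R/2\|^2$. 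You instead keep the square intact, lower bound each $V_i$ by $(Z_i/2)^2$ times the posterior mass of $\{\mu\le Z_i/2\}$, and show the complementary mass is below $1/2$ by the bulk-versus-tail comparison $\nu((Z_i/2,\infty))\big/\bigl(\tfrac12 e^{-(Z_i+M)^2/\sigma_0^2}\bigr)$, then count good coordinates with Hoeffding. Your quantifier bookkeeping is sound: $c$ is fixed from $\alpha$ alone, the threshold on $\sigma_0$ then depends on $c$, $M$ and the tail of $\nu$, and $n\to\infty$ last — exactly matching the statement. What your route buys is transparency and a cleaner construction (the mean vector $c(\alpha)\sigma_0$ does not involve the prior's tail function, only the admissible range of $\sigma_0$ does); what the paper's route buys is an explicit quantitative inequality, namely \eqref{eq.Vi_to_show}, expressed in terms of $Q$, which feeds the heuristic discussion in Section \ref{sec.prod_priors}. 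Both arguments are uniform over $\sigma^2\in[\sigma_0^2/2,2\sigma_0^2]$ for the same reason: the tail bound on the numerator is $\sigma^2$-free and the bulk bound on the denominator is worst at the left endpoint of the interval.
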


The proof of Proposition \ref{log post bound} constructs a lower bound on $\sigma_0^2$ that is independent of $n$ and moreover guarantees that $\nu$ has sufficiently small mass outside $[-\sigma_0^2,\sigma_0^2].$ It therefore depends on the tail behavior of the prior mean distribution $\nu.$ The mean vector $\mu_0$ is subsequently chosen with all means being equal to an expression only depending on $\sigma_0^2.$ Thus the means in $\mu_0$ are uniformly bounded and independent of $n$ as well.

Suppose that almost all posterior mass is close to $\sigma_0^2.$ By the previous proposition, the posterior is increasing at least up to $2\sigma_0^2.$ Hence, there must be even more mass around $2\sigma_0^2.$ This is a contradiction and shows that the posterior does not concentrate around $\sigma_0^2.$ The proof of the next theorem is based on this argument. For this result, the means in the vector $\mu_0$ can again be chosen to be uniformly bounded.

\begin{thm}\label{posterior inconsistency}
	Given $\alpha <1$  and the prior above, then, for all sufficiently large $\sigma_0^2,$ there exists a mean vector $\mu_0$ such that
	\begin{equation*}
	\begin{split}
	\lim_{n\to\infty} E_0^n\Big[\Pi\Big( \Big| \frac{\sigma^2}{\sigma_0^2}-1\Big| \leq \frac 12 \Big|Y,Z\Big)\Big] = 0.
	\end{split}
	\end{equation*}
	Consequently, the posterior is inconsistent and assigns all its mass outside of a neighbourhood of the true variance.
\end{thm}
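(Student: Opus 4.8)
The plan is to deduce the theorem directly from Proposition \ref{log post bound} by a shift-and-compare argument. Observe first that the event in the theorem, $\{|\sigma^2/\sigma_0^2 - 1| \le 1/2\}$, is exactly $\{\sigma^2 \in [\sigma_0^2/2, 3\sigma_0^2/2]\}$. I would take the same large $\sigma_0^2$ and the same mean vector $\mu_0$ supplied by Proposition \ref{log post bound}, and set
\[
A_n := \Big\{\partial_{\sigma^2}\log \pi(\sigma^2|Y,Z) \ge \sigma_0^{-2}n, \ \forall \sigma^2\in \big[\tfrac{\sigma_0^2}2,2\sigma_0^2\big]\Big\},
\]
so that $P_0^n(A_n)\to 1$. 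The heuristic in the text is that on $A_n$ the posterior density grows so steeply that the neighborhood $[\sigma_0^2/2,3\sigma_0^2/2]$ must carry negligible mass compared with a slightly translated copy of itself.

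To make this precise, I would work on $A_n$ and fix the shift $\delta := \sigma_0^2/2$. For every $t \in [\sigma_0^2/2, 3\sigma_0^2/2]$ the whole segment $[t, t+\delta]$ lies inside $[\sigma_0^2/2, 2\sigma_0^2]$, so integrating the derivative bound gives
\[
\log \pi(t+\delta\,|Y,Z) - \log \pi(t\,|Y,Z) = \int_t^{t+\delta} \partial_s \log \pi(s|Y,Z)\, ds \ge \delta\,\sigma_0^{-2} n = \tfrac n2,
\]
hence $\pi(t+\delta\,|Y,Z) \ge e^{n/2}\,\pi(t\,|Y,Z)$ pointwise on the neighborhood. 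Integrating this inequality and substituting $s=t+\delta$ yields
\[
\Pi\big(\sigma^2\in[\sigma_0^2,2\sigma_0^2]\,\big|Y,Z\big) = \int_{\sigma_0^2/2}^{3\sigma_0^2/2}\pi(t+\delta\,|Y,Z)\,dt \ge e^{n/2}\int_{\sigma_0^2/2}^{3\sigma_0^2/2}\pi(t\,|Y,Z)\,dt.
\]
Since the left-hand side is a probability and therefore at most $1$, I conclude that on $A_n$
\[
\Pi\Big(\Big|\tfrac{\sigma^2}{\sigma_0^2}-1\Big|\le \tfrac12\,\Big|Y,Z\Big) = \int_{\sigma_0^2/2}^{3\sigma_0^2/2}\pi(t\,|Y,Z)\,dt \le e^{-n/2}.
\]

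Finally, I would split the expectation across $A_n$ and its complement:
\[
E_0^n\Big[\Pi\Big(\Big|\tfrac{\sigma^2}{\sigma_0^2}-1\Big|\le \tfrac12\,\Big|Y,Z\Big)\Big] \le e^{-n/2} + P_0^n(A_n^c),
\]
using the bound just obtained on $A_n$ and the trivial bound $\Pi(\cdot\,|Y,Z)\le 1$ on $A_n^c$. Both terms vanish as $n\to\infty$, which proves the claim.

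The substantive work is entirely contained in Proposition \ref{log post bound}; given that result, the argument is short. The only point requiring care is the interval bookkeeping: the derivative control holds only on $[\sigma_0^2/2,2\sigma_0^2]$, so the shift must be chosen small enough ($\delta \le \sigma_0^2/2$) that both the integration segments $[t,t+\delta]$ and the translated neighborhood $[\sigma_0^2/2+\delta,\,3\sigma_0^2/2+\delta]$ remain inside $[\sigma_0^2/2,2\sigma_0^2]$; taking $\delta=\sigma_0^2/2$ is the largest admissible choice and already produces the exponential factor $e^{n/2}$ needed to suppress the neighborhood's mass.
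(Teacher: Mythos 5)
Your proposal is correct and follows essentially the same route as the paper: both arguments integrate the derivative lower bound from Proposition \ref{log post bound} over a shift of length $\sigma_0^2/2$ to obtain $\pi(t+\sigma_0^2/2\,|Y,Z)\geq e^{n/2}\pi(t\,|Y,Z)$ on the relevant interval, compare the mass of $[\sigma_0^2/2,3\sigma_0^2/2]$ with that of its translate $[\sigma_0^2,2\sigma_0^2]\subseteq[\sigma_0^2/2,2\sigma_0^2]$, and bound the latter by $1$. The interval bookkeeping you flag is handled identically in the paper, so nothing further is needed.
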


The posterior is therefore inferior if compared to the frequentist variance estimator $\overline{Y^2},$ which achieves the parametric rate $n^{-1/2}$ in the sense that
\begin{align*}
\sup_{\sigma_0^2>0} E_0^n\Big[ \Big| \frac{\overline{Y^2}}{\sigma_0^2} -1 \Big| \Big] \lesssim n^{-1/2}.
\end{align*}
It is remarkable that no conditions on the tail behavior of the prior distribution $\nu$ are required for Theorem \ref{posterior inconsistency}. Recall that for the improper uniform prior the posterior contracts around $\sigma_0^2.$ This shows that for distributions with heavy tailed densities, very sharp bounds are required. 

To the best of our knowledge there are no negative results in the nonparametric Bayes literature that hold for such a large class of priors. The proof strategy to establish Proposition \ref{log post bound} is based on a highly non-standard shrinkage argument that will be sketched here. By expanding the square term in (\ref{Var term}) we can lower bound (\ref{PDE_marg_like}) by
\begin{align*}
\partial_{\sigma^2} \log \pi(\sigma^2|Y,Z) &\geq \frac{\|Y\|^2}{2\sigma^4} + \frac{\|Z\|^2}{2\sigma^4} - \frac{n}{2\sigma^2} -  \frac{1}{\sigma^4}\sum_{i=1}^{n_2} V_i + O_P(1),
\end{align*}
where $V_i := |Z_i| \int |\mu_i| d\Pi(\mu | Z_i, \sigma^2).$ For $\sigma^2$ close to $\sigma_0^2$, we have
\begin{align*}
\partial_{\sigma^2} \log \pi(\sigma^2|Y,Z) &\geq \frac{n_2 \overline{\mu_0^2}}{2\sigma_0^4} -  \frac{1}{\sigma_0^4}\sum_{i=1}^{n_2} V_i + O_P(\sqrt{n}).
\end{align*}
For an improper uniform prior, one can check that $V_i \geq Z_i^2$, making the lower bound negative and useless. For a proper prior, there is a shrinkage phenomenon in the sense that for any $c>0$ there are parameters $(\mu_i^0)^2\asymp\sigma_0^2$ such that $V_i \leq cZ_i^2$, with high $P_0^n-$probability. If this is the case then
\begin{align*}
\partial_{\sigma^2} \log \pi(\sigma^2|Y,Z) &\geq \bigg(\frac{1}{2}-2c\bigg)\frac{n_2}{2\sigma_0^2} + O_P(\sqrt{n}),
\end{align*}
which yields the conclusion by choosing $c>0$ small enough.

In Proposition \ref{log post bound} we showed that the posterior overshoots the true variance $\sigma_0^2$ whenever the true means are large enough. By analyzing the Gaussian case in the next section, we see that for small means the posterior will in fact underestimate $\sigma_0^2$ and that only for a small range of mean vectors, one can hope that the posterior will be able to concentrate around the true variance. 

\section{Gaussian mixture priors}
\label{sec.Gaussian_mix}

\subsection{Gaussian priors} To illustrate our approach, we first consider an i.i.d. Gaussian prior on the mean vector
\begin{align*}
\mu_i \sim \mN(0,\theta^2), \ \text{independently.}
\end{align*}
From Theorem \ref{posterior inconsistency} we already know that the posterior will be inconsistent in this case. Nevertheless, the Gaussian assumption yields more explicit formulas and this allows us to build a hierarchical prior resulting in good posterior contraction properties. By Remark \ref{rem.marg_lik}, the marginal likelihood is the same as in the sequence model with random means \eqref{eq.model2_iid}. The marginal posterior is therefore
\begin{equation}\label{gauss_posterior}
\begin{split}
\pi\big(\sigma^2 \big| Y,Z \big) 
&\propto \sigma^{-n_1}(\theta^2+\sigma^2)^{-\frac{n_2}{2}} e^{- \frac{\|Y\|^2}{2\sigma^2}} e^{- \frac{\|Z\|^2}{2(\theta^2+\sigma^2)}} \pi(\sigma^2),
\end{split}
\end{equation}
which can also be written as the product of two inverse Gamma densities. In view of the Bernstein-von Mises phenomenon, the posterior concentrates around the MLE for parametric problems. Similarly, we can argue here that the posterior will be concentrated around the value $\widehat \sigma^2$ maximizing the likelihood part of the posterior \eqref{gauss_posterior}. By differentiation, we find $n_1 \widehat{\sigma}^2 + n_2\widehat{\sigma}^4/(\widehat{\sigma}^2 + \theta^2) =  \|Y\|^2 + \widehat{\sigma}^4\|Z\|^2/(\theta^2+\widehat{\sigma}^2)^2$ and rearranging yields
\begin{equation*}
\begin{split}
\widehat{\sigma}^2 - \overline{Y^2} &= \frac{n_2}{n_1} \bigg(\frac{\widehat{\sigma}^2}{\theta^2 + \widehat{\sigma}^2} \bigg)^2 \big[\overline{Z^2} - \theta^2 - \widehat{\sigma}^2\big].
\end{split}
\end{equation*}
This can be rewritten as 
\begin{equation}\label{MMAP_eq_Gaussian2}
\begin{split}
&\widehat{\sigma}^2 - \sigma_0^2 +O_P(n^{-1/2}) \\
&= \frac{1-\alpha}{\alpha} \big(1+O(n^{-1})\big) \bigg(\frac{\widehat{\sigma}^2}{\theta^2 + \widehat{\sigma}^2} \bigg)^2  \bigg[\sigma_0^2 - \widehat{\sigma}^2 + \overline{\mu_0^2} +O_P(n^{-1/2}) - \theta^2\bigg],
\end{split}
\end{equation}
where we set $$\overline{\mu_0^2}=\|\mu_0\|^2/n_2$$ and suppress the dependence of the $O()$ term on $\sigma_0^2$ and $\mu_0.$ Since $\theta$ is fixed, this shows that for $\widehat{\sigma}^2=\sigma_0^2 +O_P(n^{-1/2}),$ we need
\begin{align}
\overline{\mu_0^2} = \theta^2 +O_P(n^{-1/2}).
	\label{eq.theta2_condition}
\end{align}
Differently speaking, to force the maximum $\widehat \sigma^2$ to be close to $\sigma_0^2,$ the variance $\theta^2$ of the prior has to match the empirical variance $\overline{\mu_0^2}$ of the nuisance parameter. We can also deduce from \eqref{MMAP_eq_Gaussian2} that if  $|\overline{\mu_0^2}-\theta^2| \gg n^{-1/2}$ and $\theta$ is fixed, then also $|\widehat \sigma^2 -\sigma_0^2| \gg n^{-1/2}.$ More precisely, we even have that $\overline{\mu_0^2}-\theta^2 \gg n^{-1/2}$ implies $\widehat \sigma^2 -\sigma_0^2 \gg n^{-1/2}$ and $\overline{\mu_0^2}-\theta^2 \ll -n^{-1/2}$ implies $\widehat \sigma^2 -\sigma_0^2 \ll -n^{-1/2}.$ This shows that, depending on the size of $\overline{\mu_0^2}$ compared to $\theta^2,$ the posterior can either overestimate or underestimate the true variance.

If $\theta$ is allowed to vary with $n$, we can make the right hand side in \eqref{MMAP_eq_Gaussian2} arbitrarily small by letting $\theta$ tend to infinity. As $\theta^2$ is the variance of the prior, the behavior resembles then that of the uniform improper prior, which, as we already know, leads to posterior consistency. If we think of a prior as a prior belief on the parameters, then the prior should not change depending on the amount of available data and, in particular, it is unnatural that the prior becomes more vague if the sample size increases. In the next section we show that there are sample size independent mixture priors leading to parametric posterior contraction rates.

\subsection{Mixture priors}
Section \ref{sec.prod_priors} explains the posterior inconsistency for an i.i.d. prior on the nuisance. It seems unintuitive that introducing dependency on the prior of the nuisance parameter can help avoiding posterior inconsistency for $\sigma_0^2.$ Surprisingly, this is not true. In this section, we first provide some intuition why mixture priors can resolve the issues of i.i.d. priors. Afterwards, we discuss and analyze a specific prior construction.

Analyzing Gaussian priors above, \eqref{eq.theta2_condition} suggests that for any nuisance parameter vector $\mu_0,$ there exists an i.i.d. prior which seems to work. This i.i.d. prior does, however, depend on the unknown $\mu_0$ and can therefore not be chosen without knowledge of the data. Intuitively, if the posterior had the chance to see all possible i.i.d. priors on $\mu,$ instead of just one, it is conceivable that it would automatically select one that is adapted to the unknown nuisance parameter and consequently leads to posterior consistency for the parameter of interest. De Finetti's theorem \cite{MR76206} states that an exchangeable prior $\nu$ over the infinite sequence $\mu = (\mu^1,\mu^2,\dots)$ can be written as a mixture over i.i.d. priors in the sense that
\begin{align*}
	\nu(A^1\times\dots\times A^k) := \int_{\mP(\R)} Q(A^1)\cdots Q(A^k) \lambda(dQ),
\end{align*} 
with $\lambda$ a probability measure on the set of probability densities $\mP(\R)$ on $\R$. Assuming interchangeability of the integrals, the posterior \eqref{marginal_posterior} then becomes
\begin{equation*}
\begin{split}
	\pi\big(\sigma^2 \big| Y,Z \big) &\propto \pi(\sigma^2) \int_{\R^n} \frac{L(\sigma^2,\mu|Y,Z)}{L(\sigma_0^2,\mu_0|Y,Z)} \nu(\mu)d\mu, \\
	&= \pi(\sigma^2) \int_{\mP(\R)} \bigg(\int_{\R^n} \frac{L(\sigma^2,\mu|Y,Z)}{L(\sigma_0^2,\mu_0|Y,Z)}  \prod_{i=1}^n q(\mu^i)d\mu^i\bigg) \lambda(dq),
\end{split}
\end{equation*}
where $q$ denotes the probability density function of $Q.$ Let $q_0$ be the i.i.d. prior maximizing the interior integral. Suppose that this is a unique maximum and that the outer integral is determined by the behavior of the integrand in a suitable neighborhood $\mS$ of $q_0.$ This means that 
\begin{equation*}
\begin{split}
	\pi\big(\sigma^2 \big| Y,Z \big) &\propto \pi(\sigma^2) \int_{\mP(\R)} \bigg(\int_{\R^n} \frac{L(\sigma^2,\mu|Y,Z)}{L(\sigma_0^2,\mu_0|Y,Z)}  \prod_{i=1}^n q(\mu^i)d\mu^i\bigg) \lambda(dq) \\
	&\approx \pi(\sigma^2) \int_{\mS} \bigg(\int_{\R^n} \frac{L(\sigma^2,\mu|Y,Z)}{L(\sigma_0^2,\mu_0|Y,Z)}  \prod_{i=1}^n q(\mu^i)d\mu^i\bigg) \lambda(dq) \\
	&\approx \pi(\sigma^2) \bigg(\int_{\R^n} \frac{L(\sigma^2,\mu|Y,Z)}{L(\sigma_0^2,\mu_0|Y,Z)}  \prod_{i=1}^n q_0(\mu^i)d\mu^i\bigg) \int_{\mS} \lambda(dq).
\end{split}
\end{equation*}
The right hand side is the posterior density of $\sigma^2$ for i.i.d. prior $\prod_{i=1}^n q_0(\mu^i)$ on the components. 


Although this argument is only a sketch, it suggests that something might be gained by mixing over i.i.d. priors instead of just choosing one. Maximizing the marginalized likelihood in \eqref{gauss_posterior} over $\theta^2$ yields 
\begin{align}
	\theta^2 = \overline{Z^2}-\sigma^2,
	\label{eq.theta2_opt}
\end{align}
if the r.h.s. is non-negative. For this choice of $\theta^2,$  \eqref{gauss_posterior} becomes $\pi(\sigma^2|Y,Z)  \propto \sigma^{-n_1}\exp(- \|Y\|^2/(2\sigma^2)) \pi(\sigma^2).$ The posterior therefore coincides with the posterior density based on the first part of the sample only, which we know has good posterior contraction properties. 


{\bf Prior.}  In a first step generate $\theta^2 \sim \gamma$, with $\gamma$ a positive Lebesgue density on $\R_+.$ Given $\theta^2,$ each non-zero mean is drawn independently from a centered normal distribution with variance $\theta^2,$ that is, $\mu_i|\theta^2 \sim \mathcal{N}(0,\theta^2), $ $i>n_1.$

Another heuristic about the posterior properties for this prior can again be derived by making the link to the associated sequence model with random means \eqref{eq.model2}. For the prior considered here, the random means model has the form 
\begin{align}
Y_i \sim \mN(0, \sigma_0^2), \  i=1,\ldots, n_1 \ \text{and} \ Z_i |\theta^2 \sim\mN(0,\theta^2+\sigma_0^2), \ i=n_1+1, \ldots, n,
\label{eq.model2_hyper}
\end{align}
with $\theta^2 \sim \gamma.$ If $\theta^2$ were a second parameter and not generated from $\gamma,$ the variance $\sigma_0^2$ would not be identifiable if only the $Z_i$'s are observed. In model \eqref{eq.model2_hyper} we know the density $\gamma,$ but this is not enough to consistently reconstruct $\sigma_0^2$ from the subsample $Z.$ By Remark \ref{rem.marg_lik}, this model leads to the same posterior for $\sigma^2.$ The posterior should therefore realize that there is little extractable information about $\sigma_0^2$ in $Z$ and discard these observations. We will see in the limiting shape result below that this is roughly what happens.

We denote by $\ell(\sigma^2|Y)$ and $\ell(\sigma^2+\theta^2|Z)$ the log-likelihoods of the sub-samples $Y$ and $Z$ coming from model \eqref{eq.model2_hyper} with $\sigma^2$ replacing $\sigma_0^2,$ that is
\begin{align}
\begin{split} \label{eq.log-like}
	\ell(\sigma^2|Y) &= -\frac{n_1}{2}\log(2\pi\sigma^2) -\frac{n_1\overline{Y^2}}{2\sigma^2}, \\
	\ell(\sigma^2+\theta^2|Z) &= -\frac{n_2}{2}\log(2\pi(\sigma^2+\theta^2)) -\frac{n_2\overline{Z^2}}{2(\sigma^2+\theta^2)}.
\end{split}
\end{align}
The log-likelihoods appearing in \eqref{eq.log-like} can be written in terms of inverse-gamma distributions. We denote by $\IG(\gamma,\beta)$ the inverse-gamma distribution with shape $\gamma>0$ and scale $\beta>0.$ The corresponding p.d.f. is 
\begin{align}\label{eq.InvGamma}
\begin{split}
	f_{\IG(\gamma,\beta)}(x) &= \frac{\beta^\gamma}{\Gamma(\gamma)} x^{-\gamma-1} e^{-\frac{\beta}{x}},
\end{split} 
\end{align} 
where $\Gamma(\cdot)$ is the Gamma function. Rewriting the posterior, we have that

\begin{lem} \label{lemma post}
	Under the Gaussian mixture prior, the marginal posterior density has the form
	\begin{equation}\label{post}
	\begin{split}
	\pi(\sigma^2|Y,Z) &\propto f_{\IG(\gamma_1, \beta_1)}(\sigma^2) \bigg(\int_0^{+\infty} f_{\IG(\gamma_2, \beta_2)}(\sigma^2+\theta^2)\gamma(\theta^2)d\theta^2\bigg) \pi(\sigma^2),
	\end{split}
	\end{equation}
	with $\gamma_1 = n_1/2-1,$ $\beta_1 = n_1\overline{Y^2}/2$ and $\gamma_2 = n_2/2-1,$ $\beta_2=n_2\overline{Z^2}/2.$ The $\IG(\gamma_1,\beta_1)-$distribution has mode $\beta_1/(\gamma_1+1) = \overline{Y^2}$ and variance $\beta_1^2/(\gamma_1-1)^2(\gamma_1-2) = O(n^{-1}),$ whereas the $\IG(\gamma_2,\beta_2)-$distribution has mode $\beta_2/(\gamma_2+1) = \overline{Z^2}$ and variance $\beta_2^2/(\gamma_2-1)^2(\gamma_2-2) = O(n^{-1}).$
\end{lem}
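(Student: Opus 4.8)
The plan is to compute the marginal posterior directly from Bayes' formula \eqref{marginal_posterior}, exploiting the equivalence with the associated random means model \eqref{eq.model2_hyper} (Remark \ref{rem.marg_lik}), and then to read off the two inverse-gamma factors from the log-likelihoods in \eqref{eq.log-like}. The only genuine work is bookkeeping of normalizing constants, so that the factor dropped in passing to ``$\propto$'' is free of $\sigma^2$; the computation of modes and variances is then a standard inverse-gamma calculation.

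First I would write the marginal likelihood of model \eqref{eq.model2_hyper}, as a function of $\sigma^2$, as the product of the contribution of $Y$ and the $\theta^2$-averaged contribution of $Z$:
\[
L(\sigma^2|Y,Z) = e^{\ell(\sigma^2|Y)} \int_0^{+\infty} e^{\ell(\sigma^2+\theta^2|Z)}\, \gamma(\theta^2)\, d\theta^2,
\]
with $\ell(\sigma^2|Y)$ and $\ell(\sigma^2+\theta^2|Z)$ the log-likelihoods in \eqref{eq.log-like}. This factorization is immediate because $Y$ and $Z$ are independent given $\theta^2$, and pulling the $\theta^2$-integration to the front is legitimate by Tonelli since all integrands are nonnegative.

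The key step is to recognize each exponentiated log-likelihood as an inverse-gamma density up to a $\sigma^2$-free constant. Using $\|Y\|^2 = n_1\overline{Y^2} = 2\beta_1$ and comparing with \eqref{eq.InvGamma}, and noting that $-\gamma_1-1 = -n_1/2$ for $\gamma_1 = n_1/2-1$,
\[
e^{\ell(\sigma^2|Y)} = (2\pi)^{-n_1/2}(\sigma^2)^{-n_1/2} e^{-\beta_1/\sigma^2} = (2\pi)^{-n_1/2}\,\frac{\Gamma(\gamma_1)}{\beta_1^{\gamma_1}}\, f_{\IG(\gamma_1,\beta_1)}(\sigma^2),
\]
with $\beta_1 = n_1\overline{Y^2}/2$. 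The identical manipulation applied to the $Z$-likelihood, with $\|Z\|^2 = n_2\overline{Z^2} = 2\beta_2$, $\gamma_2 = n_2/2-1$, $\beta_2 = n_2\overline{Z^2}/2$, and argument $\sigma^2+\theta^2$, gives $e^{\ell(\sigma^2+\theta^2|Z)} = (2\pi)^{-n_2/2}\Gamma(\gamma_2)\beta_2^{-\gamma_2}\, f_{\IG(\gamma_2,\beta_2)}(\sigma^2+\theta^2)$. Since the constant $(2\pi)^{-n_2/2}\Gamma(\gamma_2)\beta_2^{-\gamma_2}$ is free of both $\sigma^2$ and $\theta^2$, it can be pulled out of the $\theta^2$-integral. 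Substituting both expressions, the overall constant $(2\pi)^{-n/2}\Gamma(\gamma_1)\Gamma(\gamma_2)\beta_1^{-\gamma_1}\beta_2^{-\gamma_2}$ is independent of $\sigma^2$ and absorbed into the proportionality; multiplying by $\pi(\sigma^2)$ and normalizing as in \eqref{marginal_posterior} yields exactly \eqref{post}.

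It remains to verify the modes and variances. Differentiating $\log f_{\IG(\gamma,\beta)}(x) = \mathrm{const} - (\gamma+1)\log x - \beta/x$ gives the stationarity condition $\beta/x^2 = (\gamma+1)/x$, hence mode $\beta/(\gamma+1)$; plugging in $(\gamma_1,\beta_1)$ and $(\gamma_2,\beta_2)$ produces $\overline{Y^2}$ and $\overline{Z^2}$. For the variance I would quote (or rederive from the $\IG$ moments) the formula $\beta^2/\big((\gamma-1)^2(\gamma-2)\big)$, valid for $\gamma>2$; since $\beta_j$ and $\gamma_j$ are both of order $n$ for $j=1,2$, each variance is of order $n^2/(n^2\cdot n) = O(n^{-1})$. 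I do not expect any real obstacle: the statement is an exact rewriting of the likelihood, so the one thing to watch throughout is that every quantity dropped in passing to ``$\propto$'' is genuinely $\sigma^2$-free (in particular $\beta_1,\beta_2$ depend on the data but not on $\sigma^2$), which is precisely what legitimizes the final proportionality.
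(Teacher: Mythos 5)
Your proposal is correct and follows exactly the paper's own (two-line) argument: write the marginal posterior as $e^{\ell(\sigma^2|Y)}\int_0^\infty e^{\ell(\sigma^2+\theta^2|Z)}\gamma(\theta^2)\,d\theta^2\,\pi(\sigma^2)$ and match each exponentiated log-likelihood from \eqref{eq.log-like} against the inverse-gamma density \eqref{eq.InvGamma}. You merely spell out the $\sigma^2$-free constants and the mode/variance computations that the paper leaves implicit, and these are all accurate.
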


Starting from Lemma \ref{lemma post}, we can develop a heuristic argument on how to recover the shape of the limit posterior distribution. We interpret the posterior $\Pi(\cdot |Y,Z)$ with density \eqref{post} as the marginalized version, over the set $\theta^2\in(0,+\infty),$ of the distribution $\widetilde{\Pi}(\cdot |Y,Z)$ whose density is given by
\begin{equation}\label{pseudo_post}
\begin{split}
	\widetilde \pi(\sigma^2, \theta^2|Y,Z) &\propto f_{\IG(\gamma_1, \beta_1)}(\sigma^2)  f_{\IG(\gamma_2, \beta_2)}(\sigma^2+\theta^2)\gamma(\theta^2) \pi(\sigma^2),
\end{split}
\end{equation}
and refer to $\widetilde{\Pi}(\cdot |Y,Z)$ as the joint posterior on $(\sigma^2,\theta^2)\in(0,+\infty)^2.$ 
The first step is double localization. Thanks to the exponential tails of the inverse Gamma distribution, the joint posterior $\widetilde{\Pi}(\cdot |Y,Z)$ asymptotically concentrates on the set $\{\sigma^2\in B_1\}\cap\{\theta^2\in B_2\},$ with $B_1$ a $O(\zeta_n)$-ball centered at $\overline{Y^2}$ and $B_2$ a $O(\zeta_n)$-ball around $0\vee(\overline{Z^2}-\overline{Y^2})$ for a sequence $\zeta_n \asymp \sqrt{\log n/n}.$ This also implies that the joint posterior \eqref{pseudo_post} is arbitrarily close, in total variation distance, to the truncated posterior distribution with density $\widetilde \pi(\sigma^2, \theta^2|Y,Z) \mathbf{1}(\{\sigma^2\in B_1\}\cap\{\theta^2\in B_2\}).$ In particular, this means that the hyperparameter $\theta^2$ concentrates on a neighborhood of the maximal value derived in \eqref{eq.theta2_opt}.

Arguing as in the classical proof of the Bernstein-von Mises theorem, we can then show that the truncated posterior distribution will asymptotically not depend on the prior and prove that the posterior given by \eqref{post} behaves asymptotically like
\begin{align}\label{post_local}
	\pi_1(\sigma^2|Y,Z) &= \mathbf{1}(\sigma^2 \in B_1) f_{\IG(\gamma_1, \beta_1)}(\sigma^2) \int_{B_2} f_{\IG(\gamma_2, \beta_2)}(\sigma^2 + \theta^2) d\theta^2.
\end{align}



Using essentially Laplace approximation, we show that the log-likelihoods $\ell(\sigma^2|Y)$ and $\ell(\sigma^2+\theta^2|Z)$ in \eqref{eq.log-like} can be always uniformly approximated by a second-order Taylor expansion around their maxima $\overline{Y^2}$ and $\overline{Z^2}-\sigma^2,$ and thus the localized posterior converges in total variation distance to a distribution with density
\begin{align}\label{post_local_Gauss}
	\pi_2(\sigma^2|Y,Z) \propto \mathbf{1}(\sigma^2 \in B_1) e^{-\frac{n_1}{4\sigma_0^4} (\sigma^2 - \overline{Y^2})^2} \int_{B_2} e^{-\frac{n_2}{4(\sigma_0^2+\overline{\mu_0^2})^2} (\theta^2 + \sigma^2 - \overline{Z^2})^2} d\theta^2,
\end{align}
whose factors are a truncated Gaussian density with mode $\overline{Y^2}$ and variance $2\sigma_0^4/n_1 = O(n^{-1})$ and the integral of a truncated Gaussian density with mode $\overline{Z^2}-\sigma^2$ and variance $2(\sigma_0^2+\overline{\mu_0^2})^2/n_2 = O(n^{-1}).$  By undoing the localization argument, we can show that the restriction to the sets $B_1$ and $B_2$ can be removed from \eqref{post_local_Gauss} and the posterior given by \eqref{post} converges in total variation distance to the posterior limit distribution
\begin{align}\label{post_limit}
\begin{split}
	\pi_\infty(\sigma^2|Y,Z) &\propto  \Ind(\sigma^2\geq 0) e^{-\frac{n_1}{4\sigma_0^4} (\sigma^2 - \overline{Y^2})^2} \bigg[1-\Phi\bigg(\frac{\sqrt{n_2}(\sigma^2 - \overline{Z^2})}{\sqrt{2}(\sigma_0^2+\overline{\mu_0^2})} \bigg)\bigg],
\end{split}
\end{align}
with $\Phi$ the c.d.f. of the standard normal distribution. Recall that $\overline{Z^2}\approx \sigma_0^2 + \overline{\mu_0^2}.$ This suggests that the term involving $\Phi$ in the posterior limit distribution should asymptotically disappear if $\overline{\mu_0^2} \gg n^{-1/2}.$ The limit of the posterior should then be the truncated Gaussian
\begin{align}\label{post_gauss}
	\widetilde{\pi}_\infty(\sigma^2|Y) \propto \Ind(\sigma^2\geq 0) \exp\Big(-\frac{n_1}{4\sigma_0^4} (\sigma^2 - \overline{Y^2})^2\Big),
\end{align}
with mode $\overline{Y^2}$ and variance $2\sigma_0^4/n_1 = O(n^{-1}).$ 

%

The next result is a formal statement of the arguments mentioned above. To pass to \eqref{post_gauss} involves an additional $\log n$-factor in the signal strength of $\overline{\mu_0^2}.$ Denote by $\|\cdot\|_{\TV}$ the total variation distance and recall that the expectation $E_0^n$ is taken with respect to model \eqref{model}. 

\begin{thm}
	\label{thm.BvM}
	Let $\Pi_\infty( \cdot |Y,Z)$ and $\widetilde{\Pi}_\infty( \cdot |Y)$ be the distributions corresponding to the densities \eqref{post_limit} and \eqref{post_gauss}, respectively. If the prior densities $\gamma, \pi:[0,\infty) \rightarrow (0,\infty)$ are positive and uniformly continuous, then, for any compact sets $K\subset (0, \infty), K'\subset (-\infty,\infty),$ and $n \rightarrow \infty,$
	\begin{align*}
	\sup_{\sigma_0^2 \in K, \mu_i^0 \in K', \forall i} \, E_0^n\Big[ \big\| \Pi( \cdot | Y,Z)- \Pi_\infty( \cdot |Y,Z) \big\|_{\TV}\Big]
	\rightarrow 0.
	\end{align*}
	Moreover, if $\inf_{\mu_i^0 \in K', \forall i}  |\mu_i^0| \gg (\log n /n)^{1/4},$ then
	\begin{align*}
	\sup_{\sigma_0^2 \in K, \mu_i^0 \in K', \forall i} \, E_0^n\Big[ \big\| \Pi( \cdot | Y,Z)- \widetilde{\Pi}_\infty( \cdot |Y) \big\|_{\TV}\Big]
	\rightarrow 0.
	\end{align*}
\end{thm}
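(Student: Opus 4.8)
The starting point is the exact representation of the marginal posterior in Lemma~\ref{lemma post}, which I would lift to the joint posterior $\widetilde\Pi(\cdot|Y,Z)$ on $(\sigma^2,\theta^2)$ with density \eqref{pseudo_post}, so that $\Pi(\cdot|Y,Z)$ is its $\sigma^2$-marginal. Throughout I would work on the high-probability event on which $\overline{Y^2}=\sigma_0^2+O(\zeta_n)$ and $\overline{Z^2}=\sigma_0^2+\overline{\mu_0^2}+O(\zeta_n)$ with $\zeta_n\asymp\sqrt{\log n/n}$; Gaussian concentration gives this uniformly over $\sigma_0^2\in K$ and $\mu_i^0\in K'$, with complementary probability decaying faster than any polynomial. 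Since the total variation distance is bounded by $1$, controlling $E_0^n[\|\cdot\|_{\TV}]$ reduces to bounding the TV distance on this event and letting the bad-event probability vanish. The first step is the \emph{double localization}. The factor $f_{\IG(\gamma_1,\beta_1)}(\sigma^2)$ has mode $\overline{Y^2}$ and log-curvature $\asymp -n/\sigma_0^4$ there, hence is sub-Gaussian with standard deviation $\asymp n^{-1/2}$; likewise $f_{\IG(\gamma_2,\beta_2)}(\sigma^2+\theta^2)$ localizes $\sigma^2+\theta^2$ near $\overline{Z^2}$. With the balls $B_1$ around $\overline{Y^2}$ and $B_2$ around $0\vee(\overline{Z^2}-\overline{Y^2})$ as in the excerpt, I would bound the joint-posterior mass outside $B_1\times B_2$ by the Gaussian tails; since $\gamma,\pi$ are positive and uniformly continuous they are bounded above and below on a fixed neighbourhood of the modes (which remain in a compact set because $\sigma_0^2\in K$, $\mu_i^0\in K'$), so they contribute only constants and the tail integral is $O(n^{-c})$ for every $c$. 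This shows the marginal posterior is TV-close to $\pi_1$ in \eqref{post_local}.

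The second step is the \emph{Laplace approximation}. On $B_1\times B_2$ I would Taylor expand $\ell(\sigma^2|Y)$ around $\overline{Y^2}$ and $\ell(\sigma^2+\theta^2|Z)$ around $\overline{Z^2}$ to second order; the cubic remainder is of order $n\zeta_n^3=(\log n)^{3/2}n^{-1/2}\to 0$ uniformly on the localization region, so the inverse-Gamma factors are replaced, up to a factor $1+o(1)$ uniform over $B_1\times B_2$, by the Gaussian and truncated-Gaussian factors of \eqref{post_local_Gauss}, with curvatures $n_1/(2\sigma_0^4)$ and $n_2/(2(\sigma_0^2+\overline{\mu_0^2})^2)$. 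A ratio-of-densities argument as in the classical Bernstein-von Mises proof upgrades this pointwise approximation to TV convergence of $\pi_1$ to $\pi_2$. Undoing the localization, i.e. extending the $\sigma^2$-range to $[0,\infty)$ and integrating $\theta^2$ over $[0,\infty)$ rather than $B_2$, is controlled by the same tail bounds and yields \eqref{post_limit}: the $\theta^2$-integral of the Gaussian centred at $\overline{Z^2}-\sigma^2$ over $[0,\infty)$ is exactly the factor $1-\Phi(\sqrt{n_2}(\sigma^2-\overline{Z^2})/(\sqrt2(\sigma_0^2+\overline{\mu_0^2})))$. This proves the first display.

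For the second display I would specialize the $\Phi$-term. On $B_1$ we have $\sigma^2-\overline{Z^2}=-\overline{\mu_0^2}+O(\zeta_n)$, so the argument of $\Phi$ equals $-\sqrt{n_2}\,\overline{\mu_0^2}/(\sqrt2(\sigma_0^2+\overline{\mu_0^2}))(1+o(1))$. The hypothesis $\inf_i|\mu_i^0|\gg(\log n/n)^{1/4}$ forces $\overline{\mu_0^2}\gg\zeta_n$, hence $\sqrt{n_2}\,\overline{\mu_0^2}\gg\sqrt{\log n}\to\infty$; the extra $\log n$ is exactly what is needed for this to hold uniformly as $\sigma^2$ ranges over the $\zeta_n$-ball $B_1$. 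Thus $1-\Phi(\cdot)\to 1$ uniformly on $B_1$, the $\Phi$-factor drops out of \eqref{post_limit}, and the limit reduces to the truncated Gaussian $\widetilde\pi_\infty$ in \eqref{post_gauss}.

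The main obstacle, I anticipate, is making all these approximations simultaneously uniform over $K$ and $K'$ and in $E_0^n$-expectation rather than merely in $P_0^n$-probability for fixed parameters. Two points are delicate: choosing $\zeta_n$ large enough that the localization tails are negligible yet small enough that the cubic Laplace remainder still vanishes---the window $\sqrt{\log n/n}$ threads this needle---and controlling the normalizing constants throughout, since TV convergence (unlike pointwise density convergence) requires the denominators in Bayes' formula to be approximated with the same uniformity as the numerators. The sharpness of the threshold $(\log n/n)^{1/4}$ in the second part is a direct consequence of this same bookkeeping.
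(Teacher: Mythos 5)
Your proposal is correct and follows essentially the same route as the paper's proof via Proposition~\ref{prop.preudo_post_contr}: a concentration event on which $\overline{Y^2}$ and $\overline{Z^2}$ are $O(\sqrt{\log n/n})$-close to $\sigma_0^2$ and $\sigma_0^2+\overline{\mu_0^2}$, double localization of the joint posterior in $(\sigma^2,\theta^2)$ onto $B_1\times B_2$, a third-order Taylor/Laplace step with remainder $O(n\zeta_n^3)$, de-localization producing the $1-\Phi$ factor, and uniform vanishing of that factor when $\overline{\mu_0^2}\gg\sqrt{\log n/n}$. The only quibble is your claim that the localization tails are $O(n^{-c})$ for every $c$: they are in fact a fixed negative power of $n$ determined by the constant in $\zeta_n$, which must be chosen large relative to the radius of the concentration event precisely because the normalizing constant (equivalently $P(0\le\xi\le\eta)$ in Lemma~\ref{lem.altern_repr_limit2}) can itself be polynomially small --- a bookkeeping point you flag but do not resolve, and which the paper handles through the explicit constant $C$ in \eqref{eq.deltan_def} and Mill's ratio.
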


As a corollary of the proof, posterior contraction around the true variance $\sigma_0^2$ with contraction rate $O(\sqrt{\log n/n})$ can be established. In the case of large means this is an immediate consequence of the posterior limit $\widetilde{\Pi}_\infty( \cdot |Y)$ and the parametric Bernstein-von Mises theorem. For small means  it is less obvious because of the non-standard limit of the posterior.

\begin{cor}
\label{cor.post_conc}
There exists a constant $M=M(\alpha),$ such that 
	\begin{align*}
	\sup_{\sigma_0^2 \in K, \mu_i^0 \in K', \forall i} \, E_0^n\Big[\Pi\Big( \Big| \frac{\sigma^2}{\sigma_0^2}-1\Big| \geq M\sqrt{\frac{\log n}{n}} \Big| Y,Z\Big)\Big]
	\rightarrow 0.
	\end{align*}
\end{cor}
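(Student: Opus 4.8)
The plan is to deduce the corollary from the Bernstein--von Mises result of Theorem \ref{thm.BvM} together with a direct tail analysis of the explicit limit density \eqref{post_limit}. Write $A_n := \{\sigma^2 : |\sigma^2/\sigma_0^2 - 1| \geq M\sqrt{\log n/n}\}$. Since for every measurable set the posterior mass differs from its limit by at most the total variation distance,
\[
E_0^n\big[\Pi(A_n | Y,Z)\big] \leq E_0^n\big[\Pi_\infty(A_n | Y,Z)\big] + E_0^n\big[\|\Pi(\cdot|Y,Z) - \Pi_\infty(\cdot|Y,Z)\|_{\TV}\big],
\]
and the second term tends to $0$ uniformly over $\sigma_0^2 \in K,\ \mu_i^0 \in K'$ by the first assertion of Theorem \ref{thm.BvM}. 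It therefore suffices to bound $E_0^n[\Pi_\infty(A_n|Y,Z)]$, i.e. to control the mass that the \emph{explicit} limit distribution puts in the tail region.

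First I would introduce a good event. Using standard $\chi^2$-concentration for the independent Gaussians $Y_i,Z_i$, there is a constant $C$ (depending only on $K,K'$) such that on an event $G_n$ with $\sup_{\sigma_0^2\in K,\,\mu_i^0\in K'} P_0^n(G_n^c) \to 0$ one has simultaneously $|\overline{Y^2} - \sigma_0^2| \leq C\sqrt{\log n/n}$ and $|\overline{Z^2} - (\sigma_0^2 + \overline{\mu_0^2})| \leq C\sqrt{\log n/n}$. Since $\Pi_\infty(A_n|Y,Z)\leq 1$ and $P_0^n(G_n^c)\to0$, it is enough to show that $\Pi_\infty(A_n|Y,Z)$ is negligible on $G_n$. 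On this event I would analyze the unnormalized density $\Ind(\sigma^2\geq0)\,g(\sigma^2)h(\sigma^2)$ from \eqref{post_limit}, where $g(\sigma^2)=\exp(-\tfrac{n_1}{4\sigma_0^4}(\sigma^2-\overline{Y^2})^2)$ is Gaussian and $h(\sigma^2)=1-\Phi(\sqrt{n_2}(\sigma^2-\overline{Z^2})/(\sqrt2(\sigma_0^2+\overline{\mu_0^2})))\in[0,1]$ is decreasing.

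For the numerator I bound $h\leq1$ and integrate the Gaussian tail: for $\sigma^2\in A_n$ and $M$ large, $|\sigma^2-\overline{Y^2}|\geq \tfrac12\sigma_0^2 M\sqrt{\log n/n}$ on $G_n$, so $\int_{A_n} g\,h \lesssim n^{-1/2}\,n^{-\alpha M^2/16}$. For the denominator the crucial point is the uniform inequality $\overline{Z^2}\geq \sigma_0^2 - C\sqrt{\log n/n}$, which holds on $G_n$ even when $\overline{\mu_0^2}=0$ precisely because $\overline{\mu_0^2}\geq0$. On the interval $J=[\overline{Y^2}-2n^{-1/2},\overline{Y^2}-n^{-1/2}]$ this forces $g\gtrsim 1$ (the exponent is $O(n_1/n)$) while the argument of $\Phi$ is at most $C''\sqrt{\log n}$, whence $h \geq 1-\Phi(C''\sqrt{\log n}) \gtrsim n^{-c}$ for a constant $c=c(C,K)$. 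Integrating over $J$ gives $\int_0^\infty g\,h \gtrsim n^{-1/2-c}$, so that $\Pi_\infty(A_n|Y,Z) \lesssim n^{c-\alpha M^2/16}$ on $G_n$. Choosing $M=M(\alpha)$ so large that $\alpha M^2/16 > c$ makes this vanish, and combining with the good-event and total-variation bounds yields the claim.

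The main obstacle is the lower bound on the normalizing constant, and in particular making it uniform over all admissible mean vectors, including the boundary regime of small or vanishing means. In that regime $\overline{Z^2}$ and $\overline{Y^2}$ can agree up to $O(\sqrt{\log n/n})$, so the tilting factor $h$ may be only \emph{polynomially} small near the Gaussian mode; this is exactly why the tail decay must be pushed out to distance $\sqrt{\log n/n}$ in order to beat the weak normalization bound, producing the logarithmic factor in the rate and forcing $M$ to depend on $\alpha$. (For the complementary large-mean regime $\inf_i|\mu_i^0|\gg(\log n/n)^{1/4}$, the second part of Theorem \ref{thm.BvM} reduces the statement to contraction of the truncated Gaussian $\widetilde\Pi_\infty(\cdot|Y)$ centered at $\overline{Y^2}\approx\sigma_0^2$ with standard deviation $\asymp n^{-1/2}$, which is immediate; so the difficulty genuinely sits in the small-mean regime handled above.)
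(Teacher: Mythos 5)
Your argument is correct, but it follows a genuinely different route from the paper's. The paper never analyzes the limit density at all: it reads the corollary directly off Proposition \ref{prop.preudo_post_contr}(ii), which already shows that on the event $A_n$ the posterior puts asymptotically all of its mass on $B_1=[\overline{Y^2}/(1+\zeta_n),\overline{Y^2}/(1-\zeta_n)]$; since $\overline{Y^2}=\sigma_0^2(1+O(\delta_n))$ on $A_n$ and $\zeta_n,\delta_n$ are of order $\sqrt{\log n/n}$ with constants depending only on $\alpha$, this interval sits inside $\{|\sigma^2/\sigma_0^2-1|\le M(\alpha)\sqrt{\log n/n}\}$, so the corollary is a by-product of the localization step used to prove Theorem \ref{thm.BvM}. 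You instead take only the final total-variation statement of Theorem \ref{thm.BvM} as input and compute the tail mass of $\pi_\infty$ by hand. That computation is sound: bounding the skewing factor by $1$ gives the numerator bound $\lesssim n^{-1/2}\,n^{-\alpha M^2/16}$, and your key observation --- that $\overline{\mu_0^2}\ge 0$ forces $\overline{Z^2}\ge\sigma_0^2-O(\sqrt{\log n/n})$ on the good event, so that $1-\Phi(\cdot)$ is at worst polynomially small, $\gtrsim n^{-c}$, on an interval of length $n^{-1/2}$ near the Gaussian mode --- is exactly what makes the normalizing constant uniformly bounded below over all admissible means, including vanishing ones. Your route is self-contained given the theorem statement and explains where the $\log n$ in the rate comes from; the paper's is shorter because the localization has already been established. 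One small repair: as written, your good-event constant $C$ is an absolute constant depending on $K,K'$, so the resulting $c$ and $M$ inherit a dependence on $K$ (e.g.\ through the requirement $M\ge 2C/\inf K$), whereas the corollary claims $M=M(\alpha)$. To recover this, state the good event in relative form, $|\overline{Y^2}/\sigma_0^2-1|\le\sqrt{\log n/n}$ and $|(\overline{Z^2}-\overline{\mu_0^2})/\sigma_0^2-1|\le\sqrt{\log n/n}$ (still of probability tending to one uniformly, by Chebyshev); then the argument of $\Phi$ on your interval $J$ is at most of order $\sqrt{2(1-\alpha)\log n}$, the exponent $c$ can be taken slightly larger than $1-\alpha$, and any $M$ with $\alpha M^2/16>c$ works, depending on $\alpha$ alone.
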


The posterior limit distribution is closely related to the class of skew normal distributions, see \cite{Az1985,Az1996}. 
We now derive an alternative characterization of the limit distribution. From the argumentation above, the p.d.f. 
\begin{align}
	\propto \mathbf{1}(\sigma^2,\theta^2\geq 0) e^{-\frac{n_1}{4\sigma_0^4} (\sigma^2 - \overline{Y^2})^2} e^{-\frac{n_2}{4(\sigma_0^2+\overline{\mu_0^2})^2} (\theta^2 + \sigma^2 - \overline{Z^2})^2}
	\label{eq.joint_post_to_marg}
\end{align}
can be viewed as the joint posterior limit of $(\sigma^2,\theta^2).$ In particular, the posterior limit distribution is the marginal distribution with respect to $\sigma^2.$ As this is clear from the context, we do not write explicitly that the following distributions are conditional on $Y,Z,$ that is, $Y,Z$ are assumed to be fixed.

%

\begin{lem}
\label{lem.altern_repr_limit2}
Let 
\begin{align*}
	\xi \sim \mN\Big( \overline{Y^2}, \frac{2\sigma_0^4}{n_1}\Big), \quad \eta \sim \mN\Big(\overline{Z^2}, \frac{2(\sigma_0^2+\overline{\mu_0^2})^2}{n_2}\Big).
\end{align*}
be independent. The distribution with p.d.f. \eqref{eq.joint_post_to_marg} coincides with the distribution of 
\begin{align*}
	(\xi,\eta-\xi) \big | (0 \leq \xi\leq \eta).
\end{align*}
In particular, the posterior limit distribution $\Pi_\infty( \cdot |Y,Z)$ coincides with the distribution of
\begin{align*}
	\xi \big | (0 \leq \xi\leq \eta).
\end{align*}
\end{lem}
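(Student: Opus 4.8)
The plan is to recognize the joint posterior limit \eqref{eq.joint_post_to_marg} as the law of a pair of independent Gaussians after a linear change of variables followed by a truncation. First I would record that $\xi$ and $\eta$ have densities proportional to $\exp(-n_1(x-\overline{Y^2})^2/(4\sigma_0^4))$ and $\exp(-n_2(y-\overline{Z^2})^2/(4(\sigma_0^2+\overline{\mu_0^2})^2))$, so that their respective variances are exactly $2\sigma_0^4/n_1$ and $2(\sigma_0^2+\overline{\mu_0^2})^2/n_2$, matching the two exponential factors in \eqref{eq.joint_post_to_marg}. Since $\xi$ and $\eta$ are independent, their joint density is the product of these two Gaussians.

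Next I would apply the shear $(\xi,\eta)\mapsto(U,V):=(\xi,\eta-\xi)$, whose Jacobian equals one, so that the joint density of $(U,V)$ is obtained by substituting $x=u$ and $y=u+v$ in the product density. This yields a density proportional to $\exp(-n_1(u-\overline{Y^2})^2/(4\sigma_0^4))\exp(-n_2(u+v-\overline{Z^2})^2/(4(\sigma_0^2+\overline{\mu_0^2})^2))$. The conditioning event $\{0\leq\xi\leq\eta\}$ becomes $\{U\geq 0\}\cap\{V\geq 0\}$ in the new coordinates, and this event has positive probability because both Gaussians have full support. Conditioning therefore multiplies the density by the indicator $\mathbf{1}(u\geq 0,v\geq 0)$ and renormalizes.

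Identifying $(u,v)$ with $(\sigma^2,\theta^2)$, the conditional density of $(U,V)$ given $\{0\leq\xi\leq\eta\}$ coincides, up to its normalizing constant, with the unnormalized expression in \eqref{eq.joint_post_to_marg}. Since both are nonnegative, integrable (the Gaussian tails guarantee convergence of the truncated integral), and agree as unnormalized functions, their normalized versions are equal; this proves the first assertion. The second assertion follows immediately: the posterior limit $\Pi_\infty(\cdot|Y,Z)$ is by definition the $\sigma^2$-marginal of \eqref{eq.joint_post_to_marg}, and the $U$-marginal of the law of $(U,V)=(\xi,\eta-\xi)$ given $\{0\leq\xi\leq\eta\}$ is precisely the law of $\xi$ given $\{0\leq\xi\leq\eta\}$.

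There is no serious obstacle here: the argument is a change of variables combined with the observation that truncating a joint density and taking a marginal commute with the linear map. The only point requiring a line of care is checking that the integral of the truncated product is finite and strictly positive so that conditioning is well defined, which is immediate from the Gaussian decay and full support. As a consistency check on the identification, integrating \eqref{eq.joint_post_to_marg} over $\theta^2\geq 0$ reproduces the factor $1-\Phi(\sqrt{n_2}(\sigma^2-\overline{Z^2})/(\sqrt{2}(\sigma_0^2+\overline{\mu_0^2})))$ appearing in \eqref{post_limit}, confirming that the $\sigma^2$-marginal agrees with the previously derived posterior limit density.
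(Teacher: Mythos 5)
Your proposal is correct. It establishes the same key identity as the paper — that the joint density of $(\xi,\eta-\xi)$ conditioned on $\{0\leq\xi\leq\eta\}$ is proportional to $f_\xi(s)f_\eta(s+t)\mathbf{1}(s\geq 0,t\geq 0)$ — but reaches it by a cleaner device. The paper computes the conditional c.d.f. $P(\xi\leq s,\eta-\xi\leq t\mid 0\leq\xi\leq\eta)\propto P((\eta-t)\vee 0\leq\xi\leq\eta\wedge s)$, conditions on $\eta$, splits the resulting integral into three pieces, differentiates $\partial_s\partial_t$, and has to treat the cases $0\leq s\leq t$ and $0\leq t\leq s$ separately. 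You instead apply the change-of-variables formula for the unimodular shear $(\xi,\eta)\mapsto(\xi,\eta-\xi)$ and observe that conditioning on a positive-probability event is truncation plus renormalization; this avoids the case distinction and the differentiation of the c.d.f. entirely. Your version is shorter and arguably more transparent; the paper's version is more elementary in that it never invokes the multivariate transformation formula. The two small points you flag — positivity and finiteness of the normalizing constant, and the commutation of marginalization with conditioning — are exactly the right things to check, and your closing consistency check against \eqref{post_limit} matches the paper's passage from \eqref{eq.joint_post_to_marg} to the $\Phi$-factor in $\pi_\infty$.
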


If the standard deviations of $\eta,\xi$ are small compared to the means, the posterior limit distribution essentially compares the means $\overline{Y^2}$ and $\overline{Z^2}.$ This behavior is very reasonable because if $\overline{\mu_0^2}$ is small, $\overline{Y^2}\approx \overline{Z^2}$ and the subsample $Z$ becomes informative about $\sigma^2.$

The posterior limit depends on unknown quantities. A frequentist estimator mimicking the posterior would be to estimate $\sigma^2$ from the MLE for zero means $\overline{X^2}$ in the case that the means are small. To detect whether small means are present, we can check whether $\overline{Y^2}\geq \overline{Z^2},$ which leads then to the estimator 
\begin{align*}
\widetilde \sigma^2 =
\begin{cases}
\overline{Y^2}, &\text{if} \  \overline{Y^2} < \overline{Z^2}, \\
\overline{X^2}, &\text{otherwise}. 
\end{cases}
\end{align*}

\subsection{Finite sample analysis}
We compare the estimators $\widehat \sigma_Y^2 = \overline{Y^2}$ and $\widetilde \sigma^2$ to the maximum $\widehat \sigma_{\map,\infty}^2$ and the mean $\widehat \sigma_{\mmean,\infty}^2$ of the limit density $\sigma^2 \mapsto \pi_\infty(\sigma^2|Y,Z)$ for sample sizes $n\in\{10,100,1000\}.$ As discussed above, we expect to see some differences for small means. We study the performances for $\sigma_0^2=1$ and $\mu$ the vector with all entries equal to $t/n^{1/4}$ for the values $t\in \{0, 1,2,5\}.$ Since $\widehat \sigma_Y^2$ does not depend on the means, the estimator performs equally well in all setups.  Table \ref{tab} reports the average of the squared errors and the corresponding standard errors based on $10.000$ repetitions. The rescaled MLE $\widehat \sigma_Y^2$ performs worse than any of the other estimators for small signals. Among the other estimators there is no clear 'winner'. For $t=5,$ the risk of all estimators is nearly the same. For larger values of $t,$ our simulation experiments did not show any changes compared to $t=5$ and the results are therefore omitted from the table.

\begin{table}[ht]
	\caption{Comparison of the estimators for  $(\sigma_0^2, \mu_0)=(1, (t/n^{1/4},\ldots,t/n^{1/4}))$ and $t\in \{0, 1,2,5\}.$}
	\label{tab}
	\begin{tabular}{crrrrc}
		\hline
		Estim. & \multicolumn{1}{c}{$n$} & \multicolumn{1}{c}{$0$} & \multicolumn{1}{c}{$1$} & \multicolumn{1}{c}{$2$} & \multicolumn{1}{c}{$5$} \\
		\hline
		& 10 & 0.414 ($\pm$ 8.7e-03) & 0.411 ($\pm$ 8.6e-03) & 0.386 ($\pm$ 8.2e-03) & 0.399 ($\pm$ 8.4e-03) \\ 
		$\widehat \sigma_Y^2$ & 100 & 0.040 ($\pm$ 5.9e-04) & 0.040 ($\pm$ 5.9e-04) & 0.390 ($\pm$ 5.7e-04) & \textbf{0.041} ($\pm$ 6.4e-04) \\ 
		& 1000 & 0.004 ($\pm$ 5.7e-05) & 0.004 ($\pm$ 5.6e-05) & 0.004 ($\pm$ 5.8e-05) & \textbf{0.004} ($\pm$ 5.8e-05) \\ \hline 
		
		& 10 & 0.235 ($\pm$ 3.1e-03) & 0.268 ($\pm$ 4.2e-03) & 0.336 ($\pm$ 6.2e-03) & 0.399 ($\pm$ 8.4e-03) \\
		$\widetilde \sigma^2$ & 100 & \textbf{0.028} ($\pm$ 3.8e-04) & \textbf{0.031} ($\pm$ 4.2e-04) & 0.037 ($\pm$ 5.2e-04) & \textbf{0.041} ($\pm$ 6.4e-05) \\
		& 1000 & \textbf{0.003} ($\pm$ 4.3e-05) & \textbf{0.003} ($\pm$ 4.4e-05) & 0.004 ($\pm$ 5.4e-05) & \textbf{0.004} ($\pm$ 5.8e-05) \\ \hline
		
		& 10 & 0.337 ($\pm$ 3.3e-03) & 0.330 ($\pm$ 4.6e-03) & 0.359 ($\pm$ 6.9e-03) & 0.398 ($\pm$ 8.3e-03)  \\
		$\widehat \sigma_{\map,\infty}^2$ & 100 & 0.036 ($\pm$ 4.3e-04) & 0.032 ($\pm$ 4.2e-04) & \textbf{0.034} ($\pm$ 4.7e-04) & \textbf{0.041} ($\pm$ 6.3e-04) \\
		& 1000 & \textbf{0.003} ($\pm$ 4.9e-05) & \textbf{0.003} ($\pm$ 4.5e-05) & \textbf{0.003} ($\pm$ 4.9e-05) & \textbf{0.004} ($\pm$ 5.8e-05) \\ \hline
		
		& 10 & \textbf{0.167} ($\pm$ 2.1e-03) & \textbf{0.182} ($\pm$ 3.8e-03) & \textbf{0.232} ($\pm$ 5.9e-03) & \textbf{0.283} ($\pm$ 7.0e-03)  \\
		$\widehat \sigma_{\mmean,\infty}^2$ & 100 & 0.040 ($\pm$ 4.5e-04) & 0.034 ($\pm$ 4.3e-04) & \textbf{0.034} ($\pm$ 4.7e-04) & \textbf{0.041} ($\pm$ 6.2e-04) \\
		& 1000 & 0.004 ($\pm$ 5.1e-05) & \textbf{0.003} ($\pm$ 4.6e-05) & \textbf{0.003} ($\pm$ 4.9e-05) & \textbf{0.004} ($\pm$ 5.8e-05) \\ \hline
	\end{tabular}
\end{table}

There has been a long-standing debate whether Bayesian methods perform well if interpreted as frequentist methods. Results like the complete class theorem and the Bernstein-von Mises theorem have been foundational in this regard, see \cite{LC86, GhosalvdVaart2017}. Our theory highlights another instance where Bayes leads to new estimators with good finite sample properties. The analysis moreover shows that the construction of a prior resulting in a posterior with good frequentist properties can be highly non-intuitive. 

%
%

\appendix
\section{Proofs} \label{app.proofs}

\subsection{Proofs for Section \ref{sec.log_post}}

\begin{proof} [Proof of Proposition \ref{prop_PDE}] 
	By direct computation, 
	\begin{align*}
	\partial_{\sigma^2}\log L(\sigma^2|Y,Z) &= -\frac{n}{2\sigma^2} + \frac{\|Y\|^2}{2\sigma^4} + \frac{\partial_{\sigma^2}\big(\int e^{-\frac{\|Z-\mu\|^2}{2\sigma^2}}d\nu(\mu)\big)}{\int e^{-\frac{\|Z-\mu\|^2}{2\sigma^2}}d\nu(\mu)}.
	\end{align*}
	Since
	\begin{align*}
	\partial_{\sigma^2}\Big(\int e^{-\frac{\|Z-\mu\|^2}{2\sigma^2}}d\nu(\mu)\Big) = \int \frac{\|Z-\mu\|^2}{2\sigma^4} e^{-\frac{\|Z-\mu\|^2}{2\sigma^2}}d\nu(\mu),
	\end{align*}
	we recover \eqref{PDE_marg_like}.
\end{proof}

\subsection{Proofs for Section \ref{sec.prod_priors}}

\begin{proof} [Proof of Proposition \ref{log post bound}] 
	It is enough to show that the following statements hold for sufficiently large sample size $n.$ Let $Q(u)=\nu([-u, u]^c)/\nu([-u,u]).$ Since $\nu$ is a distribution function $Q(u) \rightarrow 0$ for $u\rightarrow \infty.$ We work on $I = [\sigma_0^2/2,2\sigma_0^2],$ where $\sigma_0^2$ is chosen such that
	\begin{align}
	Q(\sigma_0) \leq \exp\Big(-48\Big(17+2e^2+\frac{24}{1-\alpha}\Big)\Big),
	\label{eq.sigma02_lb1}
	\end{align}
	and $\alpha$ denotes the fraction of known zero means in the model. Notice that
	\begin{align}
	\frac{\sigma^2}2 \leq \sigma_0^2 \leq 2\sigma^2 \quad \text{for all } \ \sigma^2\in I.
	\label{eq.sigma_equiv}
	\end{align}
	Let 
	\begin{align}\label{eq.R}
	R := \frac {\sigma_0} {\sqrt{6}} \sqrt{\log \Big(\frac{1}{Q(\sigma_0)}\Big)}.
	\end{align}
	We choose the non-zero means to be
	\begin{align}\label{eq.mu0}
	\mu_i^0 := \frac{R}{2}.
	\end{align}
	The interval $I$ is compact and the prior $\pi$ is continuous and positive on $\mathbb{R}_+,$ $\inf_{\sigma^2 \in I}  \pi(\sigma^2)>0.$ Since we also assumed that $\pi'$ is continuous, we find that 
	\begin{align*}
	\sup_{\sigma^2 \in I} \frac{\sigma_0^2|\pi'(\sigma^2)|}{n\pi(\sigma^2)} \leq 1
	\end{align*}
	for all sufficiently large $n.$ With \eqref{PDE_marg_like} and \eqref{eq.sigma_equiv}, 
	\begin{equation}\label{vn}
	\begin{split}
	\inf_{\sigma^2\in I} \partial_{\sigma^2}\log \pi(\sigma^2|Y,Z)  
	&\geq \frac{n}{\sigma_0^2} \inf_{\sigma^2\in I} \Big( \frac{\sigma_0^2V(\mu | (Z, \sigma^2) )}{2n\sigma^4} - \frac{\sigma_0^2}{2\sigma^2}-1\Big) \\
	&\geq \frac{n}{\sigma_0^2}\Big( \frac{\inf_{\sigma^2\in I} V(\mu | (Z, \sigma^2) )}{8\sigma_0^2n} -2\Big).
	\end{split}
	\end{equation}
	Using \eqref{mu post} and \eqref{Var term}, we expand $V(\mu | (Z, \sigma^2),$
	\begin{equation*}
	\begin{split}
	\frac{V(\mu | (Z, \sigma^2) )}{n} &= \frac{\|Z\|^2}{n} + \frac{1}{n} \int_{\R^n} (\|\mu\|^2 - 2Z^\top \mu) \pi(\mu | Z, \sigma^2) d\mu \\
	&= \frac{\|Z\|^2}{n} + \frac{1}{n} \sum_{i=1}^n \int_{\R} (\mu_i^2 - 2Z_i\mu_i) \pi(\mu_i | Z_i, \sigma^2) d\mu_i.
	\end{split}
	\end{equation*}
	Since the integrands in the latter display are positive for $|\mu_i| \geq 2|Z_i|,$ we can set $V_i := |Z_i| \int_{|\mu| \leq 2|Z_i|} |\mu| \pi(\mu | Z_i, \sigma^2) d\mu$ and bound 
	\begin{align*}
	\begin{split}
		\frac{V(\mu | (Z, \sigma^2) )}{n} &\geq \frac{\|Z\|^2}{n} - \frac{2}{n} \sum_{i=1}^{n_2} Z_i \int_{|\mu_i|\leq 2|Z_i|} \mu_i \pi(\mu_i | Z_i, \sigma^2) d\mu_i \\
		&\geq \frac{\|Z\|^2}{n} - \frac{2}{n} \sum_{i=1}^{n_2} V_i.
	\end{split}
	\end{align*}
	As a next step in the proof, we show
	\begin{align}\label{eq.Vi_to_show}
	\inf_{\sigma^2\in I} \frac{V(\mu | (Z, \sigma^2) )}{n}
	\geq \frac{\|Z\|^2}{2n} - \frac {16}n \Big\|Z -\frac{R}{2}\Big\|^2 - \frac{2n_2} n \sigma_0^2 e^2.
	\end{align}
	To prove this inequality, we distinguish the cases $|Z_i|>R$ and $|Z_i|\leq R,$ decomposing
	\begin{equation}\label{eq.V_decomp}
	\begin{split}
	V_i &=: |Z_i| (A_i+B_i)
	\end{split}
	\end{equation}
	with 
	\begin{align}
	\begin{split}
	A_i &:= \mathbf{1}(|Z_i|>R) \int_{|\mu|\leq 2|Z_i|} |\mu| \pi(\mu | Z_i, \sigma^2) d\mu \\
	B_i &:=\mathbf{1}(|Z_i|\leq R) \int_{|\mu|\leq 2|Z_i|} |\mu| \pi(\mu | Z_i, \sigma^2) d\mu.
	\end{split}\label{eq.A-C-def}
	\end{align}	
	For the term $A_i$ of \eqref{eq.A-C-def}, observe that $A_i \leq  2|Z_i| \Ind{(|Z_i|>R)}.$ If $|Z_i|>R,$ $|Z_i|\leq 2|Z_i|-R\leq 2 |Z_i-R/2|$ and therefore,
	\begin{align}
	|Z_i| A_i \leq  8\Big( Z_i - \frac{R}{2}\Big)^2.
	\label{eq.Bi_bd_final}
	\end{align}
	Next, we bound the term $B_i$ in \eqref{eq.A-C-def}. In the sequel, we frequently make use of the fact that $\sigma^2\in I.$ The idea is to split the domain of integration $0\leq|\mu|\leq 2|Z_i|$ into sets $|\mu|\leq\sigma_0$ and $\sigma_0<|\mu|\leq2|Z_i|.$ The contribution of the first part can be bounded by $\sigma_0.$ More work is needed for the second part. By expanding the square $(\mu-Z_i)^2$ in the exponent, the $Z_i^2$-terms in the numerator and denominator cancel against each other, as they do not depend on $\mu,$ and we have
	\begin{equation*}
	\begin{split}
	B_i &= \Ind(|Z_i|\leq R) \frac{\int_{|\mu|\leq 2|Z_i|} |\mu| e^{-\frac{(\mu-Z_i)^2}{2\sigma^2}} d\nu(\mu)}{\int e^{-\frac{(\mu-Z_i)^2}{2\sigma^2}} d\nu(\mu)} \\
	&\leq  \sigma_0 +\Ind(|Z_i|\leq R)\frac{\int_{\sigma_0 < |\mu|\leq 2R} |\mu| e^{-\frac{\mu^2}{2\sigma^2}} e^{\frac{\mu Z_i}{\sigma^2}} d\nu(\mu)}{\int e^{-\frac{\mu^2}{2\sigma^2}} e^{\frac{\mu Z_i}{\sigma^2}} d\nu(\mu)}.
	\end{split}
	\end{equation*}
We now treat numerator and denominator separately. For the numerator, the function $y\mapsto y e^{-y^2/2}$ attains its maximum at $y=1$ and is bounded by $e^{-1/2}.$ This means that $ |\mu| e^{-\frac{\mu^2}{2\sigma^2}} \leq \sigma e^{-1/2}\leq \sigma_0,$ where the last step follows from \eqref{eq.sigma_equiv}. Together with \eqref{eq.sigma_equiv}, we obtain
	\begin{align*}
	\Ind(|Z_i|\leq R)\int_{\sigma_0 < |\mu|\leq 2R} |\mu| e^{-\frac{\mu^2}{2\sigma^2}} e^{\frac{\mu Z_i}{\sigma^2}} \nu(\mu)d\mu
	&\leq \sigma_0 e^{\frac{4R^2}{\sigma_0^2}}\nu\big([-\sigma_0,\sigma_0]^c\big),
	\end{align*}
using $\mu Z_i/\sigma^2\leq 4R^2/\sigma_0^2$ to bound the exponent in the integral. To derive a lower bound of the denominator, we replace the integral over $\R$ by an integral over $[-\sigma_0,\sigma_0].$ On this interval, $e^{-\mu^2/(2\sigma^2)} \geq e^{-1}$ and $\Ind(|Z_i|\leq R) e^{\frac{\mu Z_i}{\sigma^2}} \geq e^{-R^2/\sigma^2} \geq e^{-2R^2/\sigma_0^2},$ since $\sigma_0\leq R.$ We obtain
	\begin{equation*}
	\begin{split}
	\Ind(|Z_i|\leq R) \int_\R e^{-\frac{\mu^2}{2\sigma^2}} e^{\frac{\mu Z_i}{\sigma^2}} d\nu(\mu)\geq e^{-1} e^{-\frac{2R^2}{\sigma_0^2}}\nu\big([-\sigma_0, \sigma_0]\big).
	\end{split}
	\end{equation*}
	Combining this with the upper bound for the numerator yields, with \eqref{eq.sigma02_lb1}, \eqref{eq.R} and the definition of the function $Q(u),$
	\begin{equation}\label{eq.C_bd_final}
	\begin{split}
	B_i \leq  e^{1+\frac{6R^2}{\sigma_0^2}} Q(\sigma_0) \sigma_0 =  e^{1-\log Q(\sigma_0)} Q(\sigma_0) \sigma_0 = e\sigma_0  \quad \text{for all} \ \sigma^2 \in I.
	\end{split}
	\end{equation}
	Together with \eqref{eq.Bi_bd_final} and \eqref{eq.V_decomp},
	\begin{align*}
	V_i \leq 8\Big(Z_i - \frac R2\Big)^2 + |Z_i| \sigma_0e, \quad \text{for all } \ \sigma^2 \in I.
	\end{align*}
	With $|Z_i| \sigma_0 e \leq Z_i^2/4+\sigma_0^2e^2,$ we finally obtain \eqref{eq.Vi_to_show}.
	
	In a final step of the proof, we derive, on an event with large probability, a deterministic lower bound for the right hand side in \eqref{eq.Vi_to_show}. Let $U_1, \ldots, U_{n_2}$ be independent random variables. Rewriting Chebyshev's inequality yields $P(n^{-1}\sum_{i=1}^{n_2} U_i > n^{-1}\sum_{i=1}^{n_2} (E[U_i] -\sigma_0^2)) \geq 1 - \sum_{i=1}^{n_2} \Var(U_i)/(n_2\sigma_0^2)^2.$ We aply this with $U_i =Z_i^2/2-16(Z_i-R/2)^2.$ Recall that $Z_i \sim \mN(R/2, \sigma_0^2).$ Therefore, $E_0[Z_i^2] = R^2/4+\sigma_0^2$ and $E[(Z_i-R/2)^2]=\sigma_0^2.$ For the variance, $\Var_0(Z_i^2)=R^2 \sigma_0^2 +\sigma_0^4$ and $\Var((Z_i-R/2)^2) =\sigma_0^4.$ Since by assumption $\alpha <1,$ Chebyshev's inequality yields then $P_0^n(\mathcal{A}_n)\to1$ when $n\to\infty$ for the set
	\begin{equation}\label{An}
	\begin{split}
	\mathcal{A}_n := \bigg\{\frac{\|Z\|^2}{2n} - \frac {16}n \Big\|Z -\frac{R}{2}\Big\|^2 \geq \frac{n_2}{n}\Big(\frac{R^2+ 4\sigma_0^2}{8} - 17\sigma_0^2\Big)\bigg\}.
	\end{split}
	\end{equation}
	On $\mathcal{A}_n,$ we have using \eqref{eq.R}, \eqref{eq.Vi_to_show} and $Q(\sigma_0) \leq \exp(-48(17+2e^2+24/(1-\alpha))),$
	\begin{equation}
	\begin{split}
	\inf_{\sigma^2\in I}\frac{V(\mu | (Z, \sigma^2) )}{8\sigma_0^2 n} 
	&\geq \frac{n_2}{8\sigma_0^2 n}\Big(\frac{R^2}{8} - \sigma_0^2( 17 +2e^2)\Big) 
	\geq 3.
	\end{split}
	\label{eq.shown}
	\end{equation}
	The assertion follows with \eqref{vn}.		
\end{proof}

\begin{proof} [Proof of Theorem \ref{posterior inconsistency}]
	Proposition \ref{log post bound} shows that $$\inf_{\sigma^2\in [\sigma_0^2/2, 2\sigma_0^2]} \partial_{\sigma^2} \log\pi(\sigma^2|Y,Z) \geq \frac{n}{\sigma_0^2}$$ has $P_0^n$-probability tending to one. This means that for $\sigma^2 , \widetilde \sigma^2 \in  [\sigma_0^2/2, 2\sigma_0^2],$ with $\sigma^2\leq  \widetilde \sigma^2,$ we must have $\log\pi(\sigma^2|Y,Z)\leq  \log\pi(\widetilde \sigma^2|Y,Z)- n(\widetilde \sigma^2 -\sigma^2)/\sigma_0^2.$ Exponentiating this inequality for $\widetilde \sigma^2 =\sigma^2 +\sigma_0^2/2,$ yields
	\begin{equation*}
	\begin{split}
	\Pi\Big(\sigma^2 \in \Big[\frac{\sigma_0^2}2,3\frac{\sigma_0^2}2\Big]\Big|Y,Z\Big) &= \int_{\sigma_0^2/2}^{3\sigma_0^2/2} \pi_n(\sigma^2|Y,Z) d\sigma^2 \\
	&\leq e^{-n/2} \int_{\sigma_0^2}^{2\sigma_0^2} \pi_n(\sigma^2|Y,Z) d\sigma^2
	\leq e^{-n/2}
	\end{split}
	\end{equation*}
	and this completes the proof since $|\sigma^2/\sigma_0^2-1|\leq 1/2$ is equivalent to $\sigma^2 \in [\sigma_0^2/2,3\sigma_0^2/2].$
\end{proof}

\subsection{Proofs for Section \ref{sec.Gaussian_mix}}

\begin{proof} [Proof of Lemma \ref{lemma post}]
	We can write the posterior as
	\begin{align}
		\pi(\sigma^2|Y,Z) &\propto \Ind(\sigma^2\geq0) e^{\ell(\sigma^2 |Y)} \int_0^{\infty} e^{\ell(\sigma^2+\theta^2|Z)} \gamma(\theta^2) d\theta^2 \pi(\sigma^2).
	\end{align}
	By using \eqref{eq.log-like} and \eqref{eq.InvGamma} we obtain \eqref{post}.
\end{proof}

We now prepare for the proof of the limiting shape result. From \eqref{post}, the density \eqref{pseudo_post} of the joint posterior is
\begin{equation*}
\begin{split}
	\widetilde \pi(\sigma^2, \theta^2|Y,Z) &\propto \Ind(\sigma^2\geq 0, \theta^2\geq 0)e^{\ell(\sigma^2 |Y)} e^{\ell(\sigma^2+\theta^2|Z)} \gamma(\theta^2)  \pi(\sigma^2).
\end{split}
\end{equation*}
With
\begin{align}
\zeta_n :=4\sqrt{\Big(1 + \Big(\frac{\alpha}{1-\alpha}\vee\frac{1-\alpha}{\alpha} \Big)\Big) \frac{\log n }{n_1\wedge n_2}} \wedge 1,
\label{eq.etan_def}
\end{align}
define
\begin{align}
\begin{split}
B_1&:=\Big[ \frac{\overline{Y^2}}{1+\zeta_n}, \frac{\overline{Y^2}}{1-\zeta_n}\Big], \\
B_2&:=\Big[ 0 \vee \Big( \frac{\overline{Z^2}}{1+\zeta_n}- \frac{\overline{Y^2}}{1-\zeta_n}\Big), \frac{\overline{Z^2}}{1-\zeta_n}-\frac{\overline{Y^2}}{1+\zeta_n}\Big].
\end{split}
\label{eq.Bdef}
\end{align}
It is shown below that the posterior concentrates on $\{\sigma^2 \in B_1\}$ and $\{\theta^2 \in B_2\}.$ The posterior can consequently be approximated by the distribution $\Pi_1(\cdot |Y,Z)$  defined through its density \eqref{post_local}. On the localized set $(\sigma^2, \theta^2) \in B_1 \times B_2,$ we are able to replace the log-likelihoods by a quadratic expansion. This then allows us to approximate the posterior by $\Pi_2(\cdot |Y,Z)$ which is defined as the distribution with density \eqref{post_local_Gauss}. We now state the single steps formally and provide the proofs.

\begin{prop}\label{prop.preudo_post_contr} If the prior densities $\gamma, \pi:[0,\infty) \rightarrow (0,\infty)$ are positive and uniformly continuous, then there exists a sequence of sets $(A_n)_n$ such that for any compact sets $K\subset (0, \infty), K'\subset (-\infty,\infty),$
	
	\begin{itemize}
		\item[(i)] $\lim_{n \to \infty}\sup_{\sigma_0^2 \in K, \mu_i^0 \in K', \forall i} P_0^n(A_n^c) = 0.$
		\item[(ii)] 	With $B_1, B_2$ as defined in \eqref{eq.Bdef}, we have for $n\rightarrow \infty,$
		\begin{equation*}
		\begin{split}
		\sup_{\sigma_0^2 \in K, \mu_i^0 \in K', \forall i} \, \widetilde \Pi\big(\big\{ \sigma^2 \notin B_1\big\}\cup \big\{ \theta^2 \notin B_2\big\} \ \big| Y,Z\big)\mathbf{1}\big((Y,Z) \in A_n\big) \rightarrow 0.
		\end{split}
		\end{equation*}
		\item[(iii)]	 For $n\rightarrow \infty,$
		\begin{equation*}
		\begin{split}
		\sup_{\sigma_0^2 \in K, \mu_i^0 \in K', \forall i} \, \Big\| \widetilde \Pi\big( \sigma^2 \in \cdot \big | Y,Z\big) - \Pi_1(\cdot |Y,Z) \Big\|_{\TV} \mathbf{1}\big((Y,Z) \in A_n\big)\rightarrow 0.
		\end{split}
		\end{equation*}
		\item[(iv)]	 For $n\rightarrow \infty,$
		\begin{equation*}
		\begin{split}
		\sup_{\sigma_0^2 \in K, \mu_i^0 \in K', \forall i} \, \Big\| \Pi_1( \cdot | Y,Z ) - \Pi_2(\cdot |Y,Z) \Big\|_{\TV} \mathbf{1}\big((Y,Z) \in A_n\big) \rightarrow 0.
		\end{split}
		\end{equation*}	
		\item[(v)]	 For $n\rightarrow \infty,$
		\begin{equation*}
		\begin{split}
		\sup_{\sigma_0^2 \in K, \mu_i^0 \in K', \forall i} \, \Big\| \Pi_2( \cdot | Y,Z ) - \Pi_\infty(\cdot |Y,Z) \Big\|_{\TV} \mathbf{1}\big((Y,Z) \in A_n\big) \rightarrow 0.
		\end{split}
		\end{equation*}	
		\item[(vi)]	 For $n\rightarrow \infty,$ and $\inf_{\mu_i^0\in K'} |\mu_i^0| \gg (\log n /n)^{1/4}$,
		\begin{equation*}
		\begin{split}
		\sup_{\sigma_0^2 \in K, \mu_i^0 \in K', \forall i} \, \Big\| \Pi_\infty(\cdot |Y,Z) - \widetilde{\Pi}_\infty(\cdot |Y) \Big\|_{\TV} \mathbf{1}\big((Y,Z) \in A_n\big) \rightarrow 0.
		\end{split}
		\end{equation*}	
	\end{itemize}
\end{prop}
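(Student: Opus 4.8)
The plan is to prove the six claims in the stated order, fixing in step (i) a single high-probability event $A_n$ on which every subsequent bound becomes deterministic, and then propagating the approximation along the chain $\widetilde\Pi \to \Pi_1 \to \Pi_2 \to \Pi_\infty \to \widetilde\Pi_\infty.$ The natural choice is $A_n = \{|\overline{Y^2}-\sigma_0^2|\le C\sigma_0^2\zeta_n\}\cap\{|\overline{Z^2}-(\sigma_0^2+\overline{\mu_0^2})|\le C(\sigma_0^2+\overline{\mu_0^2})\zeta_n\}$ for a suitable $C.$ Since $n_1\overline{Y^2}/\sigma_0^2$ and the analogous statistic for $Z$ are (non-central) chi-square variables, claim (i) follows from Bernstein-type tail bounds for sums of squared Gaussians; the $\sqrt{\log n}$ inside $\zeta_n$ makes $P_0^n(A_n^c)$ decay faster than any fixed power of $n,$ and uniformity over $K,K'$ holds because $\sigma_0^2$ and $\overline{\mu_0^2}$ enter only through quantities bounded on the compacts.

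For the localization (ii) I would work with the joint density proportional to $e^{\ell(\sigma^2|Y)}e^{\ell(\sigma^2+\theta^2|Z)}\gamma(\theta^2)\pi(\sigma^2).$ Up to normalization the two exponential factors are the $\IG(\gamma_1,\beta_1)$ density in $\sigma^2$ and the $\IG(\gamma_2,\beta_2)$ density in $\sigma^2+\theta^2,$ each with mode of order one and concentration scale $n^{-1/2}.$ The sets $B_1,B_2$ are engineered so that on $A_n$ leaving $B_1$ pushes $\sigma^2$ out of the $\zeta_n$-band of the first factor, while leaving $B_2$ (with $\sigma^2\in B_1$) pushes $\sigma^2+\theta^2$ out of the $\zeta_n$-band of the second; by strict concavity of $\log f_{\IG}$ near its mode each costs a factor $\exp(-c\,n\zeta_n^2)=n^{-c'}$ with $c'$ large, and comparison with the $O(1)$ mass retained inside $B_1\times B_2$ forces the complementary mass to vanish. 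Claim (iii) is the prior-washing step: $B_1,B_2$ both have width $O(\zeta_n)\to0,$ so by positivity and uniform continuity of $\gamma,\pi$ the ratios $\sup_{B_1}\pi/\inf_{B_1}\pi$ and $\sup_{B_2}\gamma/\inf_{B_2}\gamma$ tend to one; replacing $\gamma,\pi$ by constants (which cancel in the normalization) costs $o(1)$ in total variation, and integrating $\theta^2$ over $B_2$ leaves exactly $\Pi_1.$

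Steps (iv)--(vi) are the quadratic expansion and its reversal. For (iv) I would Taylor-expand each log-likelihood about its maximizer, using $\ell''(\sigma^2|Y)=-n_1/(2(\overline{Y^2})^2)$ at $\sigma^2=\overline{Y^2}$ and $\ell''(\cdot|Z)=-n_2/(2(\overline{Z^2})^2)$ at $\sigma^2+\theta^2=\overline{Z^2};$ on $A_n$ one may replace $(\overline{Y^2})^2,(\overline{Z^2})^2$ by $\sigma_0^4,(\sigma_0^2+\overline{\mu_0^2})^2,$ since the induced error and the discarded cubic remainder are both $O(n\zeta_n^3)=O((\log n)^{3/2}n^{-1/2})$ in the exponent on $B_1\times B_2,$ hence $o(1)$ in total variation, yielding $\Pi_2.$ For (v) I would undo both truncations: extending the $\theta^2$-integral from $B_2$ to $[0,\infty)$ turns $\int e^{-n_2(\theta^2+\sigma^2-\overline{Z^2})^2/(4(\sigma_0^2+\overline{\mu_0^2})^2)}d\theta^2$ into the Gaussian tail $1-\Phi\big(\sqrt{n_2}(\sigma^2-\overline{Z^2})/(\sqrt2(\sigma_0^2+\overline{\mu_0^2}))\big)$ of $\pi_\infty,$ while extending $\sigma^2$ from $B_1$ to $[0,\infty)$ only appends Gaussian tails; both additions are $n^{-c}$ because $B_1,B_2$ reach $\asymp\sqrt n\,\zeta_n\asymp\sqrt{\log n}$ standard deviations from their centers. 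Finally (vi): when $\inf_i|\mu_i^0|\gg(\log n/n)^{1/4}$ one has $\sqrt{n_2}\,\overline{\mu_0^2}\gg\sqrt{\log n},$ so on the effective support $\sigma^2=\overline{Y^2}+O(\zeta_n)$ the argument of $\Phi$ stays $\ll-\sqrt{\log n}$ and $1-\Phi(\cdot)=1-o(1)$ uniformly; replacing the bracket by one converts $\pi_\infty$ into $\widetilde\pi_\infty.$

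The main obstacle I anticipate is the balance forced on $\zeta_n\asymp\sqrt{\log n/n}.$ In (ii) and (v) it must be large enough that the inverse-gamma and Gaussian tails outside $B_1,B_2$ beat the polynomial bookkeeping and, through (i), render $P_0^n(A_n^c)$ negligible; this is exactly why the prefactor in $\zeta_n$ (the term $1+(\tfrac{\alpha}{1-\alpha}\vee\tfrac{1-\alpha}{\alpha})$) is tuned so that $n_1\zeta_n^2\wedge n_2\zeta_n^2\gtrsim\log n$ with a large constant. Yet in (iii) and (iv) the same $\zeta_n$ must be small enough that the prior is nearly constant and the cubic remainder is negligible on $B_1\times B_2.$ Keeping all constants uniform over $\sigma_0^2\in K$ and all $\mu_i^0\in K',$ and in particular handling the regime $\overline{\mu_0^2}\approx0$—where $B_2$ collapses toward the origin and the $\Phi$-term genuinely survives in (vi)—is where the estimates are most delicate.
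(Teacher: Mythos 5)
Your architecture coincides with the paper's: a high-probability event, double localization onto $B_1\times B_2$, prior washing via uniform continuity, a third-order Taylor/Laplace step, de-localization to the skewed limit, and dropping the $\Phi$-factor for large means. Steps (iii), (iv) and (vi) as you describe them match the paper's argument essentially line by line. The genuine weak point is your choice of $A_n$. The paper's event \eqref{An2} has two parts: a concentration part at the \emph{finer} scale $\delta_n=\zeta_n/C$ (see \eqref{eq.deltan_def}), and the additional event $\{\overline{Z^2}>\overline{Y^2}/(1+\delta_n/2)\}$. Your proposed $A_n$ contains only a concentration statement at scale $\zeta_n$ and omits the second event. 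When $\overline{\mu_0^2}\approx 0$ this matters: on your event one can have $\overline{Z^2}-\overline{Y^2}\asymp-\zeta_n$, in which case the interval $B_2$ in \eqref{eq.Bdef} is empty, $\Pi_1$ is not even well defined, and the window $\Theta(\sigma^2)=[\overline{Z^2}-\sigma^2,\overline{Z^2}(1+\delta_n)-\sigma^2]$ used to lower-bound the normalizing constant in (ii) exits $(0,\infty)$. One could instead shrink your constant $C$ until the event forces $\overline{Z^2}\geq\overline{Y^2}/(1+\delta_n/2)$, but then the required $C$ is dictated by $\delta_n/\zeta_n$ and your claim that $P_0^n(A_n^c)$ decays faster than any power of $n$ becomes false (and is not needed); the two-scale structure $\delta_n\ll\zeta_n$ has to be made explicit somewhere, and your write-up does not do so.

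The same omission resurfaces in step (v), where your estimate is too coarse. The limit $\Pi_\infty$ is the law of $\xi\mid(0\le\xi\le\eta)$ (Lemma \ref{lem.altern_repr_limit2}), so the tail masses $P(\xi\notin B_1)$ and $P(\eta-\xi\notin B_2,\dots)$ must be divided by $P(0\le\xi\le\eta)$. For small means this conditioning probability is \emph{not} bounded away from zero on the event: the paper's bound \eqref{eq.v2} gives only $P(0\le\xi\le\eta)\gtrsim(\sqrt{n_1}\delta_n)^{-1}e^{-n_1\delta_n^2/4}$, which is polynomially small, and its proof again uses $\overline{Y^2}/(1+\delta_n/2)\le\overline{Z^2}$. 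Saying that the additions outside $B_1\times B_2$ are ``$n^{-c}$'' therefore does not close the argument; one must verify $n\zeta_n^2/64>n\delta_n^2/4$, which is exactly what the constant $C^2\geq 32$ in \eqref{eq.deltan_def} is tuned for. You correctly flag this tension in your final paragraph, but the proof as proposed does not resolve it: you need to add the event $\{\overline{Z^2}>\overline{Y^2}/(1+\delta_n/2)\}$ to $A_n$ and carry the ratio $\zeta_n/\delta_n=C$ explicitly through (ii) and (v).
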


\begin{proof}[Proof of Proposition \ref{prop.preudo_post_contr}]
	Recall the definition of $\zeta_n$ in \eqref{eq.etan_def} and set
	\begin{align}
		\delta_n := C^{-1}\zeta_n = \sqrt{2\frac{\log n }{n_1\wedge n_2}} \wedge C^{-1}, \ \text{with} \  C^2 := 16 + 16\Big(\frac{\alpha}{1-\alpha}\vee\frac{1-\alpha}{\alpha} \Big).
		\label{eq.deltan_def}
	\end{align}
	Let $\underline{\sigma}_0^2 = \inf \{\sigma_0^2 \in K\}>0.$ Define the event
	\begin{align}\label{An2}
	A_n := \Big\{ \overline{Z^2}> \frac{\overline{Y^2}}{1+\delta_n/2} \Big\} \cap \Big\{\Big|\frac{\overline{Z^2} -\overline{\mu_0^2}}{\sigma_0^2}-1\Big|+ \Big|\frac{\overline{Y^2}}{\sigma_0^2}-1 \Big| \leq \delta_n\Big\}.
	\end{align}
Since $\delta_n \leq 1/2,$ this implies in particular that on $A_n,$ $\overline{Y^2}\wedge \overline{Z^2} \geq \underline \sigma_0^2/2.$

	{\em Proof of (i):}  We simplify the notation by introducing the events
	\begin{align*}
	B_n := \Big\{ \overline{Z^2}> \frac{\overline{Y^2}}{1+\delta_n/2} \Big\},\quad D_n := \Big\{\Big|\frac{\overline{Z^2} -\overline{\mu_0^2}}{\sigma_0^2}-1\Big|+ \Big|\frac{\overline{Y^2}}{\sigma_0^2}-1 \Big| \leq \delta_n\Big\},
	\end{align*}
	so that $A_n=B_n\cap D_n$. Thus $P_0^n(A_n^c) \leq P_0^n(B_n^c) + P_0^n(D_n^c).$ We show that both $P_0^n(B_n^c)$ and $P_0^n(D_n^c)$ tend to zero uniformly over compact sets of parameters. By Chebyshev's inequality,
	\begin{align*}
	P_0^n(D_n^c) &\leq P_0^n\bigg( \Big|\frac{\overline{Z^2} -\overline{\mu_0^2}}{\sigma_0^2}-1\Big| > \frac{\delta_n}{2} \bigg) + P_0^n\bigg( \Big|\frac{\overline{Y^2}}{\sigma_0^2}-1 \Big| > \frac{\delta_n}{2} \bigg) \\ &\leq 4\frac{\Var_0\Big(\frac{\overline{Z^2} -\overline{\mu_0^2}}{\sigma_0^2}\Big) + \Var_0\Big(\frac{\overline{Y^2}}{\sigma_0^2}\Big)}{\delta_n^2}.
	\end{align*}
	Since
	\begin{align*}
	\Var_0\bigg(\frac{\overline{Z^2}-\overline{\mu_0^2}}{\sigma_0^2}\bigg) = \frac{2}{n_2}+\frac{4\overline{\mu_0^2}}{n_2\sigma_0^2},\quad \Var_0\bigg(\frac{\overline{Y^2}}{\sigma_0^2}\bigg) &= \frac{2}{n_1},
	\end{align*}
	we find
	\begin{align*}
	\sup_{\sigma_0^2 \in K, \mu_i^0 \in K', \forall i} P_0^n(D_n^c) \leq \frac{8}{n_1\delta_n^2} + \frac{8}{n_2\delta_n^2} + \frac{16H}{n_2\delta_n^2}
	\end{align*}
	with 	$H := \sup_{\sigma_0^2 \in K, \mu_i^0 \in K', \forall i} (\mu_i^0)^2/\sigma_0^2 .$ Notice that $H$ is a finite constant since $K\subset (0,\infty)$ and $K'$ are compact sets. Because $\delta_n =O(\sqrt{\log n/n}),$ the previous probability tends to zero as $n$ increases. We now bound $P_0^n(B_n^c)$. Rewriting $B_n^c,$ we obtain 
	\begin{align*}
	B_n^c = \Bigg\{\bigg(1+\frac{\delta_n}{2}\bigg)\bigg(\frac{\overline{Z^2}-\overline{\mu_0^2}}{\sigma_0^2} - 1\bigg) + 1 - \frac{\overline{Y^2}}{\sigma_0^2} \leq -\frac{\delta_n}{2} - \bigg(1+\frac{\delta_n}{2}\bigg)\frac{\overline{\mu_0^2}}{\sigma_0^2} \Bigg\},
	\end{align*}
	and again by Chebyshev's inequality
	\begin{align*}
	P_0^n(B_n^c) &\leq \frac{\big(1+\frac{\delta_n}{2}\big)^2\Var_0\Big(\frac{\overline{Z^2} -\overline{\mu_0^2}}{\sigma_0^2}-1\Big) + \Var_0\Big(1 - \frac{\overline{Y^2}}{\sigma_0^2}\Big)}{\Big(\frac{\delta_n}{2} + \big(1+\frac{\delta_n}{2}\big)\frac{\overline{\mu_0^2}}{\sigma_0^2}\Big)^2} \\
	&\leq \bigg(1+\frac{\delta_n}{2}\bigg)^2\bigg(\frac{8}{n_2\delta_n^2}+\frac{16H}{n_2\delta_n^2}\bigg) + \frac{8}{n_1\delta_n^2},
	\end{align*}
	which again tends to zero for $n\to\infty$ uniformly over $\sigma_0^2 \in K, \mu_i^0 \in K', \forall i.$
	
	{\em Proof of (ii):} We work on the event $A_n$ defined in \eqref{An2} deriving deterministic lower and upper bounds for the denominator and numerator in the Bayes formula. We start with
	\begin{align}\label{eq.mass_joint_post_B1c}
		\widetilde{\Pi}(B_1^c\times\R_+|Y,Z) &= \frac{\int_{B_1^c} e^{\ell(\sigma^2 |Y)} \int_0^\infty e^{\ell(\sigma^2+\theta^2|Z)} \gamma(\theta^2) d\theta^2 \pi(\sigma^2) d\sigma^2}{\int_0^\infty e^{\ell(\sigma^2 |Y)} \int_0^\infty e^{\ell(\sigma^2+\theta^2|Z)} \gamma(\theta^2) d\theta^2 \pi(\sigma^2) d\sigma^2},
	\end{align}
	and show that on the event $A_n$ this quantity tends to $0$ when $n$ tends to infinity. The first part of the proof provides a lower bound for the denominator. For that, we restrict $\sigma^2 \in \Sigma:= [\overline{Y^2}/(1+\delta_n), \overline{Y^2}/(1+\delta_n/2)]$ and $\theta^2 \in \Theta(\sigma^2):= [\overline{Z^2}-\sigma^2, \overline{Z^2}(1+\delta_n)-\sigma^2]\subset (0, \infty),$ where the last inclusion follows since by definition of the event $A_n$ in \eqref{An2}, $Z^2-\sigma^2 \geq Z^2-\overline{Y^2}/(1+\delta_n/2) \geq 0$. The inner integral in the denominator of \eqref{eq.mass_joint_post_B1c} can be lower bounded by
	\begin{align*}
		\int_0^{\infty} e^{\ell(\sigma^2+\theta^2|Z)} \gamma(\theta^2) d\theta^2
		&\geq \int_{\Theta(\sigma^2)} e^{\ell(\sigma^2+\theta^2|Z)} d\theta^2 \ \inf_{\theta^2 \leq \overline{Z^2}(1+\delta_n)} \gamma(\theta^2).
	\end{align*}
Thanks to the definition of $A_n$ in \eqref{An2} and $\delta_n \leq 1,$ we have $\overline{Z^2} \leq \overline{\mu_0^2} + \sigma_0^2(1+\delta_n),$ so that $\overline{Z^2}(1+\delta_n) \leq 2\overline{\mu_0^2}+4\sigma_0^2.$ We then set 
	\begin{align*}
		\underline \gamma:=  \inf_{\theta^2 \leq \sup_{\sigma_0^2 \in K, \mu_i^0 \in K', \forall i} 2\overline{\mu_0^2}+4\sigma_0^2} \gamma(\theta^2)  \leq \inf_{\theta^2 \leq \overline{Z^2}(1+\delta_n)} \gamma(\theta^2).
	\end{align*}
	Since $K,K'$ are compact sets and $\gamma$ is continuous and positive, we must have $\underline \gamma>0.$ Differentiating \eqref{eq.log-like} gives $\partial_{\theta^2} \ell(\sigma^2+\theta^2|Y)= \tfrac 12 n_2(\overline{Z^2}-\sigma^2-\theta^2)/(\sigma^2+\theta^2)^2,$ so the function $\theta^2 \mapsto \ell(\sigma^2+\theta^2 |Y)$ is decreasing on $\Theta(\sigma^2)$ for any $\sigma^2.$ As a direct consequence of \eqref{eq.log-like}, we obtain
	\begin{align}\label{eq.log-like_theta_bound}
		\ell\big( \overline{Z^2}(1+\delta_n)|Z\big) &= \ell\big(\overline{Z^2}|Z\big) + \frac{n_2}{2}\big(\delta_n/(1+\delta_n) - \log(1+\delta_n)\big).
	\end{align}
	Consequently, for any $\sigma^2 \in \Sigma,$
	\begin{align}\label{eq.denom1}
	\begin{split}
	\int_0^{\infty} e^{\ell(\sigma^2+\theta^2|Z)} \gamma(\theta^2) d\theta^2
	&\geq \underline\gamma
	\overline{Z^2} \delta_n e^{\ell( \overline{Z^2}|Z)+\frac{n_2}{2}(\delta_n/(1+\delta_n) - \log(1+\delta_n))} \\
	&\geq \frac 12 \underline\gamma
	\underline \sigma_0^2 \delta_n e^{\ell( \overline{Z^2}|Z)-\frac{n_2}{4}\delta_n^2},
	\end{split}
	\end{align}
	where the last inequality follows since $\overline{Z^2}\geq\sigma_0^2/2$ on $A_n,$ $\delta_n \leq 1,$ and $-\log(1+\delta_n) \geq -\delta_n$ for $\delta_n \leq 1.$ The right hand side does not depend on $\sigma^2$ anymore. To lower bound the first integral in the denominator of \eqref{eq.mass_joint_post_B1c} we apply a similar argument. By \eqref{eq.log-like}, $\partial_{\sigma^2} \ell(\sigma^2|Y)= n_1(\overline{Y^2}-\sigma^2)/(2\sigma^4).$ This means that the function $\sigma^2 \mapsto \ell(\sigma^2 |Y)$ is increasing on $\Sigma$ and   \eqref{eq.log-like} yields
	\begin{align*}
		\ell\big(\overline{Y^2}/(1+\delta_n)\big) &= \ell\big(\overline{Y^2} |Y\big) + \frac {n_1}{2}\big(\log(1+\delta_n)-\delta_n\big).
	\end{align*} 
On $A_n,$ $\overline{Y^2}\leq\sigma_0^2(1+\delta_n)$ and therefore $\overline{Y^2}/(1+\delta_n/2) \leq 2\sigma_0^2.$ Set
	\begin{align*}
		\underline \pi  :=\inf_{\sigma^2 \leq \sup_{\sigma_0^2\in K} 2\sigma_0^2} \pi(\sigma^2) \leq \inf_{\sigma^2 \leq \overline{Y^2}/(1+\delta_n/2)} \pi(\sigma^2),
	\end{align*}
	so that $\underline{\pi} >0$ because $K$ is a compact set and $\pi$ is continuous and positive. We bound
	\begin{align}\label{eq.denom2}
	\begin{split}
		\int_0^{\infty} e^{\ell(\sigma^2 |Y)} \pi(\sigma^2) d\sigma^2 
		&\geq \inf_{\sigma^2 \in \Sigma} \pi(\sigma^2) \frac{\delta_n}2 \overline{Y^2} e^{\ell(\overline{Y^2}/(1+\delta_n) |Y)} \\
		&\geq  \underline \pi \frac{\delta_n}2 \overline{Y^2} e^{\ell(\overline{Y^2} |Y) + \frac {n_1}{2}(\log(1+\delta_n)-\delta_n)} \\
		&\geq  \frac 14 \underline \pi \delta_n \underline \sigma_0^2 e^{\ell(\overline{Y^2} |Y) - \frac {n_1}{16}\delta_n^2},
	\end{split}	
	\end{align}  
	using that on $A_n,$ $\overline{Y^2}\geq \sigma_0^2/2$ and $\log(1+\delta_n)\geq \delta_n -\delta_n^2/8$ for $0\leq \delta_n\leq 1.$ The product of the lower bounds obtained in \eqref{eq.denom1} and \eqref{eq.denom2} is then a lower bound for the denominator of \eqref{eq.mass_joint_post_B1c}.
	
In the next step we upper bound the numerator of \eqref{eq.mass_joint_post_B1c}. Firstly, observe that $\ell(\sigma^2 +\theta^2| Z) \leq \ell(\overline{Z^2}|Z)$ and 
	\begin{align}
	\int_0^\infty e^{\ell(\sigma^2 +\theta^2| Z)} \gamma(\theta^2) d\theta^2 \leq e^{\ell(\overline{Z^2}|Z)}.
	\label{eq.d1}
	\end{align}
	Secondly, since $\sigma^2 \mapsto \ell(\sigma^2|Y)$ is increasing on $(0, \overline{Y^2}]$ and decreasing on $[\overline{Y^2}, \infty),$
	\begin{align} 
	\int_0^{\overline{Y^2}/(1+\zeta_n)} e^{\ell(\sigma^2|Y)} \pi(\sigma^2) d\sigma^2
	&\leq  e^{\ell(\overline{Y^2}/(1+\zeta_n) |Y)} \notag \\
	&=  e^{\ell(\overline{Y^2} |Y) + \frac {n_1}{2} (\log(1+\zeta_n) - \zeta_n)}  \notag  \\
	&\leq e^{\ell(\overline{Y^2} |Y) - \frac {n_1}{16}  \zeta_n^2}, \label{eq.d2} \\
	\int_{\overline{Y^2}/(1-\zeta_n)}^\infty e^{\ell(\sigma^2|Y)} \pi(\sigma^2) d\sigma^2 
	&\leq  e^{\ell(\overline{Y^2}/(1-\zeta_n) |Y)}
	=  e^{\ell(\overline{Y^2} |Y) + \frac {n_1}{2} (\log(1-\zeta_n) + \zeta_n)} \notag  \\
	&\leq e^{\ell(\overline{Y^2} |Y) - \frac {n_1}{16}  \zeta_n^2}. \notag
	\end{align}
The numerator of \eqref{eq.mass_joint_post_B1c} is upper bounded by the product of the bounds obtained in \eqref{eq.d1} and \eqref{eq.d2}. Together with the bounds on the denominator in \eqref{eq.denom1} and \eqref{eq.denom2}, and $\zeta_n=C\delta_n,$ we derive, on the event $A_n,$ the following bound for \eqref{eq.mass_joint_post_B1c}: 
	\begin{align}
		\sup_{\sigma_0^2 \in K, \mu_i^0 \in K', \forall i} \, \widetilde \Pi \big(\sigma^2 \notin B_1 \big| Y,Z \big)
		\leq \frac{16}{\underline \pi \underline \gamma \underline \sigma_0^4\delta_n^2} e^{-(C^2n_1 - 4n_2 - n_1)\delta_n^2/16} \to 0.
	\label{eq.post_est1}
	\end{align}
	The convergence to zero follows since by definition of the constant $C$ in \eqref{eq.deltan_def}, $n_1 C^2 - 4n_2 - n_1 > 4n_1$ and because of $\delta_n =O(\sqrt{\log n/n}).$
	
Along similar lines, we show now that, on the event $A_n,$ $\widetilde{\Pi}(\theta^2\notin B_2 |Y,Z)\to 0$ as $n$ tends to infinity. Since $\{\theta^2 \notin B_2\} \subset \{\sigma^2 \notin B_1\} \cup (\{\sigma^2 \in B_1\}\cap \{\theta^2 \notin B_2\}),$ and $\widetilde{\Pi}(\sigma^2\notin B_1|Y,Z)$ tends to zero by \eqref{eq.post_est1}, it is sufficient to establish convergence of
	\begin{align}\label{eq.mass_joint_post_B1B2c}
		\widetilde{\Pi}(B_1\times B_2^c|Y,Z) &= \frac{\int_{B_1} e^{\ell(\sigma^2 |Y)} \int_{B_2^c} e^{\ell(\sigma^2+\theta^2|Z)} \gamma(\theta^2) d\theta^2 \pi(\sigma^2) d\sigma^2}{\int_0^\infty e^{\ell(\sigma^2 |Y)} \int_0^\infty e^{\ell(\sigma^2+\theta^2|Z)} \gamma(\theta^2) d\theta^2 \pi(\sigma^2) d\sigma^2}
	\end{align}
	to zero. We can argue similarly as for the upper bound above using that $\ell(\sigma^2|Y)\leq \ell(\overline{Y^2}|Y).$ By following the same steps as for \eqref{eq.d1} and \eqref{eq.d2} and using that $a\mapsto \ell(a |Z)$ is increasing on $(0,\overline{Z^2}]$ and decreasing on $[\overline{Z^2}, \infty),$ the numerator in \eqref{pseudo_post} integrated over the set $\{\sigma^2 \in B_1\}\cap \{\theta^2 \notin B_2\}$ is upper bounded by  \begin{align*}
	\leq e^{\ell(\overline{Y^2}|Y)} \sup_{\sigma^2 \in B_1}
	\int_{B_2^c} e^{\ell(\sigma^2 +\theta^2|Z)} \gamma(\theta^2) d\theta^2
	\leq 2e^{\ell(\overline{Y^2}|Y)+ \ell(\overline{Z^2}|Z) - \frac {n_2}{16}  \zeta_n^2}.
	\end{align*}
	Together with the lower bounds for the denominator in \eqref{eq.denom1} and \eqref{eq.denom2}, we upper bound \eqref{eq.mass_joint_post_B1B2c}, on the event $A_n,$ by
	\begin{align}\label{eq.post_est2}
	\sup_{\sigma_0^2 \in K, \mu_i^0 \in K', \forall i} \, \widetilde{\Pi}(B_1\times B_2^c|Y,Z)
	\leq \frac{32}{\underline \pi \underline \gamma \underline \sigma_0^4\delta_n^2} e^{-(C^2n_2 - 4n_2 - n_1)\delta_n^2/16}.
	\end{align}
By definition (see \eqref{eq.deltan_def}), the constant $C^2>0$ satisfies $n_2 C^2 - 4n_2 - n_1 > 4n_2.$ Since $\delta_n =O(\sqrt{\log n/n}),$ this implies that the right hand side of \eqref{eq.post_est2} is bounded above by $\lesssim n\exp(-n_2\delta_n^2/4) \to 0,$ as $n \to \infty.$ Together with \eqref{eq.post_est1}, this completes the proof for part (ii).
	
	{\em Proof of (iii):} It is well-known that for probability measures $P,Q$ defined  on the same measurable space $\mX,$  
\begin{align}
	\|P-P(\cdot |A)\|_{\TV}\leq 2P(A^c),
	\label{eq.TV_cond_bd}
\end{align}	
see Lemma E.1 in \cite{RSH17}. With $A=B_1\cap B_2,$ $P=\widetilde \Pi(\cdot |Y,Z)$ and $\Pi_0(\cdot |Y,Z)$ the distribution with density
	\begin{equation*}
	\begin{split}
	\pi_0(\sigma^2, \theta^2|Y,Z) &= \frac{e^{\ell(\sigma^2 |Y)}  e^{\ell(\sigma^2+\theta^2|Z)}  \mathbf{1}(\sigma^2 \in B_1, \theta^2 \in B_2)}{\int_{B_1} e^{\ell(\sigma^2 |Y)} (\int_{B_2} e^{\ell(\sigma^2+\theta^2|Z)} d\theta^2) d\sigma^2},
	\end{split}
	\end{equation*}
	we have that 
	\begin{align*}
	&\sup_{\sigma_0^2 \in K, \mu_i^0 \in K', \forall i} \, 
	\Big\| \widetilde\Pi\big( \sigma^2 \in \cdot\, \big | Y,Z\big) -\Pi_0\big( \sigma^2 \in \cdot\, \big | Y,Z\big)\Big\|_{\TV}\\
	&\leq \sup_{\sigma_0^2 \in K, \mu_i^0 \in K', \forall i} \, 
	\Big\| \widetilde\Pi\big( \sigma^2 \in \cdot,  \theta^2 \in \cdot \, \big | Y,Z\big) -\Pi_0\big( \sigma^2 \in \cdot,  \theta^2 \in \cdot  \, \big | Y,Z\big)\Big\|_{\TV}
	\rightarrow 0.
	\end{align*}
By bounding the $L^1$-distance between the densities, we now show that $\Pi_0 (\sigma^2 \in \cdot |Y,Z)$ and $\Pi_1 (\sigma^2 \in \cdot |Y,Z)$ are close in total variation using the following lemma.

\begin{lem}[Lemma E.3 in \cite{RSH17}]
\label{lem.E3_lem}
If $h(\sigma^2) \propto d\Pi_0 (\sigma^2 \in \cdot |Y,Z)/d\Pi_1(\sigma^2 \in \cdot |Y,Z)$ exists and $\int |h(\sigma^2)-1|d\Pi_1(\sigma^2|Y,Z) \leq \delta$ for some $\delta \in (0,1),$ then also $$\big\|\Pi_0\big( \sigma^2 \in \cdot\, \big | Y,Z) -\Pi_1( \sigma^2 \in \cdot\, \big | Y,Z) \big\|_{\TV}\leq \frac{\delta}{1-\delta}.$$ 
\end{lem}

As $h$ is the Radon-Nikodym derivative up to a multiplicative factor, we can choose 
	\begin{align*}
	h(\sigma^2) = \frac{\pi(\sigma^2)\int_{B_2} e^{\ell(\sigma^2+\theta^2|Z)} \gamma(\theta^2)d\theta^2}{  \inf_{\widetilde \sigma^2 \in B_1, \widetilde \theta^2 \in B_2}\pi(\widetilde \sigma^2)\gamma(\widetilde \theta^2)  \int_{B_2} e^{\ell(\sigma^2+\theta^2|Z)}  d\theta^2}  \mathbf{1}(\sigma^2 \in B_1).
	\end{align*}
	Then, 
	\begin{align}
	1\leq h(\sigma^2) \leq \frac{\sup_{\sigma^2 \in B_1, \theta^2\in B_2} \pi(\sigma^2)\gamma(\theta^2)}{\inf_{\widetilde \sigma^2 \in B_1, \widetilde \theta^2 \in B_2}\pi(\widetilde \sigma^2)\gamma(\widetilde \theta^2)}.
	\label{eq.h_ul}
	\end{align}
	Using the argument above, it remains to prove that $\sup_{\sigma^2 \in B_1} |h(\sigma^2) -1|\rightarrow 0$ for $n\rightarrow \infty.$ By the definition of $A_n$ and due to $\delta_n \leq \zeta_n,$ 
	\begin{align}\label{eq.B1_embedding}
	B_1 \subseteq B_1':= [\kappa_n \sigma_0^2, \kappa_n^{-1}\sigma_0^2] \quad \text{with} \ \kappa_n:=\frac{1-\zeta_n}{1+\zeta_n} = 1-2\zeta_n +O(\zeta_n^2).
	\end{align}
	Recall that $K$ is a compact set. Since $\pi$ is positive and uniformly continuous,
	\begin{align}
	\sup_{\sigma_0^2 \in K} \ \sup_{\sigma^2, \widetilde\sigma^2 \in [\kappa_n \sigma_0^2, \kappa_n^{-1}\sigma_0^2]} \Big| \frac{\pi(\sigma^2)}{\pi(\widetilde \sigma^2)}-1 \Big| \rightarrow 0.
	\label{eq.pi_variation}
	\end{align}
	Similarly, we have on the event $A_n,$ 
	\begin{align}\label{eq.B2_embedding}
	B_2 \subseteq B_2':= \Big[\frac{\overline{\mu_0^2}}{1+\zeta_n}+\Big(\kappa_n- \frac 1{\kappa_n}\Big) \sigma_0^2, \frac{\overline{\mu_0^2}}{1-\zeta_n}+\Big(\frac 1{\kappa_n}-\kappa_n \Big) \sigma_0^2\Big].
	\end{align}
	Since $\mu_i^0\in K'$ for all $i,$ the average of the squares $\overline{\mu_0^2}$ lies in the convex hull of $K'$ and
	\begin{align*}
	\sup_{\sigma_0^2 \in K, \mu_i^0 \in K', \forall i} \ \sup_{\theta^2, \widetilde\theta^2 \in B_2'} \Big|  \frac{\gamma(\theta^2)}{\gamma(\widetilde \theta^2)} -1 \Big| \rightarrow 0.
	\end{align*}
	For real numbers $u,v,$ $uv=(u-1)(v-1)+(u-1)+(v-1)+1.$ We therefore obtain with \eqref{eq.h_ul} and \eqref{eq.pi_variation}, $\sup_{\sigma^2 \in B_1} |h(\sigma^2) -1|\rightarrow 0$ for $n\rightarrow \infty.$ This completes the proof of $(iii).$

	{\em Proof of (iv):} We use the same strategy as in the proof of part $(iii),$ applying Lemma \ref{lem.E3_lem} to 
	\begin{align*}
	h(\sigma^2) = \Ind(\sigma^2\in B_1) e^{\ell(\sigma^2|Y)-\ell(\overline{Y^2}|Y)+ \frac{n_1}{4\sigma_0^4}(\sigma^2-\overline{Y^2})^2} \frac{\int_{B_2} e^{\ell(\sigma^2+\theta^2|Z)-\ell(\overline{Z^2}|Z)} d\theta^2}{\int_{B_2} e^{-\frac{n_2}{4(\sigma_0^2+\overline{\mu_0^2})^2}(\theta^2+\sigma^2-\overline{Z^2})^2} d\theta^2},
	\end{align*}
which is a constant multiple of the likelihood ratio of $\Pi_1(\sigma^2\in\cdot|Y,Z)$ and $\Pi_2(\sigma^2\in\cdot|Y,Z).$ To verify the assumptions of Lemma \ref{lem.E3_lem}, we have to show that $\sup_{\sigma_0^2 \in K} \ |h(\sigma^2)-1| \rightarrow 0$ for $n\rightarrow\infty.$ Using again the identity $uv=(u-1)(v-1)+(u-1)+(v-1)+1$ and the fact that $|\int f/\int g -1 |\leq \sup |f/g-1|,$ we find that it is enough to prove that on the event $A_n,$
	\begin{align}
	&\sup_{\sigma_0^2 \in K} \ \sup_{\sigma^2 \in B_1} \Big | \ell(\sigma^2|Y)- \ell(\overline{Y^2}|Y)+ \frac{n_1}{4\sigma_0^4}(\sigma^2-\overline{Y^2})^2 \Big| \rightarrow 0. \label{eq.iv_to_show_1} \\
	&\sup_{\sigma_0^2 \in K,\mu_i^0\in K',\forall i} \ \sup_{\sigma^2 \in B_1, \theta^2\in B_2} \Big | \ell(\sigma^2 + \theta^2|Z)- \ell(\overline{Z^2}|Z)+ \frac{n_2(\theta^2+\sigma^2-\overline{Z^2})^2}{4(\sigma_0^2+\overline{\mu_0^2})^2} \Big| \rightarrow 0. \label{eq.iv_to_show_2}
	\end{align}
To verify \eqref{eq.iv_to_show_1}, differentiating \eqref{eq.log-like} gives
	\begin{align*}
		\partial_{\sigma^2}\ell(\sigma^2|Y) &= \frac{n_1}{2\sigma^4}(\overline{Y^2}-\sigma^2),\quad \partial_{\sigma^2}\ell(\overline{Y^2}|Y) = 0, \\
		\partial_{\sigma^2}^2\ell(\sigma^2|Y) &= \frac{n_1}{2\sigma^6}(\sigma^2-2\overline{Y^2}),\quad \partial_{\sigma^2}^2\ell(\overline{Y^2}|Y) = -\frac{n_1}{2\overline{Y^2}^2} < 0, \\
		\partial_{\sigma^2}^3\ell(\sigma^2|Y) &= \frac{n_1}{\sigma^8}(3\overline{Y^2}-\sigma^2),
	\end{align*}
	and by a third-order Taylor expansion around the maximum $\overline{Y^2},$ 
	\begin{align*}
		\ell(\sigma^2|Y) &- \ell(\overline{Y^2}|Y) \\
		&= \frac{1}{2} \partial_{\sigma^2}^2 \ell(\overline{Y^2}|Y)(\sigma^2-\overline{Y^2})^2  + \frac{1}{6} \partial_{\sigma^2}^3 \ell(s^2|Y)(\sigma^2-\overline{Y^2})^3\\
		&= -\frac{n_1}{4\overline{Y^2}^2}(\sigma^2 - \overline{Y^2})^2 + \frac{n_1}{6s^8} (3\overline{Y^2}-s^2)(\sigma^2-\overline{Y^2})^3 \\
		&= -\frac{n_1}{4\sigma_0^4}(\sigma^2 - \overline{Y^2})^2 + \frac{n_1(\overline{Y^2}+\sigma_0^2)}{4\sigma_0^4 \overline{Y^2}^2} (\overline{Y^2}-\sigma_0^2)(\sigma^2 - \overline{Y^2})^2 \\
		& \quad \ + \frac{n_1}{6s^8} (3\overline{Y^2}-s^2)(\sigma^2-\overline{Y^2})^3,
	\end{align*}
for some $s^2$ between $\sigma^2$ and $\overline{Y^2}.$ We now control the smaller order terms uniformly over $\sigma^2 \in B_1.$ Observe that also $\overline{Y^2}, s^2 \in B_1.$ With \eqref{eq.B1_embedding}, $\sup_{\sigma^2, \widetilde \sigma^2 \in B_1} |\sigma^2-\widetilde \sigma^2|=O(\zeta_n)$ and $\sigma_0^2/2 \leq \sigma^2 \leq 2\sigma_0^2$ for all $\sigma^2 \in B_1.$ Moreover, since $K\subset (0, \infty)$ is compact, $\inf {\sigma_0^2\in K}>0.$ Together this shows that 
	\begin{align*}
	&\sup_{\sigma_0^2 \in K} \ \sup_{\sigma^2 \in B_1} \Big | \ell(\sigma^2|Y)- \ell(\overline{Y^2}|Y)+ \frac{n_1}{4\sigma_0^4}(\sigma^2-\overline{Y^2})^2 \Big| = O(n_1 \zeta_n^3) \rightarrow 0,
	\end{align*}	
establishing \eqref{eq.iv_to_show_1}. To prove \eqref{eq.iv_to_show_2} we argue similarly. Differentiating \eqref{eq.log-like} gives 
	\begin{align*}
		\partial_{\theta^2}\ell(\sigma^2+\theta^2|Z) &= \frac{n_2}{2(\sigma^2+\theta^2)^2}(\overline{Z^2}-\sigma^2-\theta^2),\quad \partial_{\theta^2}\ell(\overline{Z^2}|Z) = 0, \\
		\partial_{\theta^2}^2\ell(\sigma^2+\theta^2|Z) &= \frac{n_2}{2(\sigma^2+\theta^2)^3}(\theta^2+\sigma^2-2\overline{Z^2}), \ \ \  \partial_{\theta^2}^2\ell(\overline{Z^2}|Z) = -\frac{n_2}{2\overline{Z^2}^2} < 0, \\
		\partial_{\theta^2}^3\ell(\sigma^2+\theta^2|Z) &= \frac{n_2}{(\sigma^2+\theta^2)^4}(3\overline{Z^2}-\sigma^2-\theta^2),
	\end{align*} 
	and by a third-order Taylor expansion around the maximum $\theta_*^2=\overline{Z^2}-\sigma^2$,
	\begin{align*}
		&\ell(\sigma^2+\theta^2|Z) - \ell(\overline{Z^2}|Z) \\
		&= \frac{1}{2} \partial_{\theta^2}^2\ell(\overline{Z^2}|Z)(\theta^2+\sigma^2-\overline{Z^2})^2  + \frac{1}{6} \partial_{\theta^2}^3\ell(\sigma^2+s^2|Z) (\theta^2+\sigma^2-\overline{Z^2})^3\\
		&= -\frac{n_2}{4\overline{Z^2}^2}(\theta^2+\sigma^2 - \overline{Z^2})^2 + \frac{n_2}{6(\sigma^2+s^2)^4} (3\overline{Z^2}-\sigma^2-s^2)(\theta^2+\sigma^2-\overline{Z^2})^3 \\
		&= -\frac{n_2}{4(\sigma_0^2+\overline{\mu_0^2})^2}(\theta^2 + \sigma^2 - \overline{Z^2})^2 + \frac{n_2(\overline{Z^2}+\sigma_0^2+\overline{\mu_0^2})}{4 (\sigma_0^2+\overline{\mu_0^2})^2 \overline{Z^2}^2} (\overline{Z^2}-\sigma_0^2-\overline{\mu_0^2})(\theta^2+\sigma^2 - \overline{Z^2})^2 \\
		& \quad \ + \frac{n_2}{6(\sigma^2+s^2)^4} (3\overline{Z^2}-\sigma^2-s^2)(\theta^2+\sigma^2-\overline{Z^2})^3,
	\end{align*}
	for some $s^2$ between $\theta^2$ and $\overline{Z^2}-\sigma^2.$ If $(\sigma^2,\theta^2)\in B_1 \times B_2,$ then, on $A_n,$ both $\overline{Z^2}-\sigma^2$ and $s^2$ are in $B_2'.$ With \eqref{eq.B1_embedding} and \eqref{eq.B2_embedding}, we have $\sup_{u,v\in B_2'}|u-v|=O(\zeta_n)$ and $(\sigma_0^2+\overline{\mu_0^2})/2 \leq \sigma^2+s^2 \leq  2(\sigma_0^2+\overline{\mu_0^2})$ for sufficiently large $n.$ Together with the reasoning for \eqref{eq.iv_to_show_1}, this leads to
	\begin{align*}
		\sup_{\sigma_0^2 \in K,\mu_i^0\in K',\forall i} \ \sup_{\sigma^2 \in B_1, \theta^2\in B_2} \Big | \ell(\sigma^2 + \theta^2|Z)- \ell(\overline{Z^2}|Z)+ \frac{n_2}{4(\sigma_0^2+\overline{\mu_0^2})^2}(\theta^2+\sigma^2-\overline{Z^2})^2 \Big|
	\end{align*}
being bounded by $\lesssim n\zeta_n^3$ and thus converging to zero.

	{\em Proof of (v):} 	Define $\Pi_3(\cdot |Y,Z)$ as the distribution on $(0,\infty)^2,$ with density \eqref{eq.joint_post_to_marg}, that is,
	\begin{equation*}
	\begin{split}
	\pi_3(\sigma^2, \theta^2|Y,Z) &\propto \Ind(\sigma^2\geq 0, \theta^2\geq 0) e^{-\frac{n_1}{4\sigma_0^4} (\sigma^2-\overline{Y^2})^2} e^{-\frac{n_2}{4(\sigma_0^2+\overline{\mu_0^2})^2}(\theta^2+\sigma^2-\overline{Z^2})^2}. 
	\end{split}
	\end{equation*}
	and $\widetilde \Pi_3(\cdot |Y,Z)$ as the localization of $\Pi_3(\cdot|Y,Z)$ on $B_1\times B_2,$ that is, the distribution with density
	\begin{equation*}
	\begin{split}
	\widetilde \pi_3(\sigma^2, \theta^2|Y,Z) \propto \mathbf{1}(\sigma^2 \in B_1, \theta^2 \in B_2) e^{-\frac{n_1}{4\sigma_0^4} (\sigma^2-\overline{Y^2})^2} e^{-\frac {n_2}{4}(\sigma_0^2 + \overline{\mu_0^2})^{-2}(\theta^2 +\sigma^2 -\overline{Z^2})^2}.
	\end{split}
	\end{equation*}
Here $B_1, B_2$ are as defined in \eqref{eq.Bdef}. The marginal distributions of $\widetilde \Pi_3(\cdot |Y,Z)$ and $\Pi_3(\cdot |Y,Z)$ with respect to $\sigma^2$ are $\Pi_2(\cdot |Y,Z)$ and $\Pi_\infty(\cdot |Y,Z),$ respectively. Applying \eqref{eq.TV_cond_bd} yields
	\begin{equation}\label{eq.last(v)}
	\begin{split}
	\big\| \Pi_2(\cdot |Y,Z) -\Pi_\infty(\cdot |Y,Z) \big\|_{\TV}
	&\leq \big\| \widetilde \Pi_3(\cdot |Y,Z) -\Pi_3(\cdot |Y,Z) \big\|_{\TV} \\
	&\leq 2\Pi_3\big(\big\{\sigma^2\notin B_1\big\} \cup \big\{\theta^2 \notin B_2\big\} \big|Y,Z \big). 
	\end{split}
	\end{equation}	
To prove $(v),$ it remains to show that for $n\rightarrow \infty,$
	\begin{equation}
	\begin{split}
	\sup_{\sigma_0^2 \in K, \mu_i^0 \in K', \forall i} \, \Pi_3\big(\big\{ \sigma^2 \notin B_1\big\}\cup \big\{ \theta^2 \notin B_2\big\} \big| Y,Z\big)\mathbf{1}\big((Y,Z) \in A_n\big) \rightarrow 0.
	\end{split}\label{eq.v0}
	\end{equation}
By Lemma \ref{lem.altern_repr_limit2}, it is enough to prove that on $A_n,$
	\begin{equation}
	\begin{split}
	\sup_{\sigma_0^2 \in K, \mu_i^0 \in K', \forall i} \, P\big(\xi \notin B_1\big|(0\leq \xi \leq \eta)\big)+P\big(\eta-\xi \notin B_2\big|(0\leq \xi \leq \eta)\big) \rightarrow 0,
	\end{split}\label{eq.v1}
	\end{equation}
for independent $\xi \sim \mN( \overline{Y^2}, 2\sigma_0^4/n_1), \eta \sim \mN(\overline{Z^2}, 2(\sigma_0^2+\overline{\mu_0^2})^2/n_2).$ Recall that this and all the following statements in (v) should be understood conditionally on $Y,Z.$

To bound the terms, we heavily rely on the exponential bounds for tail probabilities of Gaussian variables given by Mill's ratio \cite{G41}
	\begin{align}
	\bigg(\frac{x^2}{1+x^2}\bigg)\frac{e^{-x^2/2}}{\sqrt{2\pi}x} \leq P\Big(\mN(0,1)>x\Big) \leq \frac{e^{-x^2/2}}{\sqrt{2\pi}x} ,\quad \forall x>0.
	\label{eq.MillsR}
	\end{align}

In a first step we derive a lower bound on $P(0\leq \xi \leq \eta).$ Using that on $A_n,$ $\overline{Y^2}/(1+\delta_n/2)\leq \overline{Z^2}=E[\eta],$ the definition of $\xi,$ the symmetry properties of the $\mN(0,1)$ distribution, $\sigma_0^2/2\leq \overline{Y^2}\leq 2\sigma_0^2$ on $A_n,$ and Mill's ratio, we find
\begin{align}
\begin{split}
	P\big(0\leq \xi \leq \eta\big)
	&\geq P\Big(0\leq \xi \leq \frac{\overline{Y^2}}{1+\delta_n/2}\Big)
	P\big(\overline{Z^2} \leq \eta\big) \\
	&= \frac 12 P\bigg(\mN(0,1) \in \bigg[-\frac{\sqrt{n_1}\overline{Y^2}}{\sqrt{2}\sigma_0^2}, -\frac{\sqrt{n_1}\delta_n\overline{Y^2}}{2\sqrt{2}(1+\delta_n/2)\sigma_0^2}\bigg]\bigg) \\
		&= \frac 12 P\bigg(\mN(0,1) \in \bigg[\frac{\sqrt{n_1}\delta_n\overline{Y^2}}{2\sqrt{2}(1+\delta_n/2)\sigma_0^2},\frac{\sqrt{n_1}\overline{Y^2}}{\sqrt{2}\sigma_0^2}\bigg]\bigg) \\
		&\geq \frac 12 P\bigg(\mN(0,1) \in \bigg[\frac{\sqrt{n_1}\delta_n}{\sqrt{2}},\frac{\sqrt{n_1}}{2\sqrt{2}}\bigg]\bigg) \\
		&= P\bigg(\mN(0,1) \geq \frac{\sqrt{n_1}\delta_n}{\sqrt{2}}\bigg) - P\bigg(\mN(0,1) \geq \frac{\sqrt{n_1}}{2\sqrt{2}}\bigg) \\
		& \geq  \frac{1}{2\sqrt{\pi n_1}\delta_n} e^{-\frac{n_1\delta_n^2}{4}}- \frac{2}{\sqrt{\pi n_1}} e^{-\frac{n_1}{16}}.
	\end{split}\label{eq.v2}	
\end{align}
	where in the last inequality we used that $x^2/(1+x^2)>\frac{1}{2}$ for $x>1$.
	
We now derive an upper bound for $P(\xi \notin B_1).$ Using the definition of $\xi,$ $\zeta_n \leq 1,$ $\overline{Y^2}\geq \sigma_0^2/2,$ and Mill's ratio \eqref{eq.MillsR},
\begin{align}
	\begin{split}
	P(\xi \notin B_1)
	&=
	P\bigg( \mN(0,1) \notin \bigg[-\frac{\sqrt{n_1}\zeta_n\overline{Y^2}}{\sqrt{2}(1+\zeta_n)\sigma_0^2}, \frac{\sqrt{n_1}\zeta_n\overline{Y^2}}{\sqrt{2}(1-\zeta_n)\sigma_0^2}\bigg]\bigg) \\
	&\leq 2P\bigg( \mN(0,1)> \frac{\sqrt{n_1}\zeta_n\overline{Y^2}}{\sqrt{2}(1+\zeta_n)\sigma_0^2}\bigg) \\
	&\leq 2 P\bigg( \mN(0,1) > \frac{\sqrt{n_1}\zeta_n}{4\sqrt{2}}\bigg) \\
	&\leq \frac{8}{\sqrt{\pi n_1}\zeta_n} e^{-\frac{n_1\zeta_n^2}{64}}.
	\end{split}\label{eq.v3}
\end{align}
Next, we obtain a similar bound for $P(\eta-\xi \notin B_2, \xi \leq \eta, \xi \in B_1).$ If we define the difference of two sets $U,V$ as $U-V:=\{u-v:u\in U, v\in V\},$ then, $B_2=([\overline{Z^2}/(1+\zeta_n),\overline{Z^2}/(1-\zeta_n)]-B_1)\cap \R_+.$ On the event $\xi \leq \eta, \xi \in B_1,$ we have that $\eta \in [\overline{Z^2}/(1+\zeta_n),\overline{Z^2}/(1-\zeta_n)]$ implies that $\eta-\xi \in B_2,$ which is equivalent to saying that $\eta-\xi\notin B_2$ implies $\eta \notin [\overline{Z^2}/(1+\zeta_n),\overline{Z^2}/(1-\zeta_n)].$ On $A_n,$ $|\overline{Z^2}-\overline{\mu_0^2}-\sigma_0^2|\leq \sigma_0^2\delta_n$ by definition. Because of $\delta_n \leq 1/2,$ we obtain $\overline{Z^2}\geq (\overline{\mu_0^2}+\sigma_0^2)/2.$ Together with the symmetry properties of the normal distribution, $\zeta_n \leq 1,$ and Mill's ratio \eqref{eq.MillsR}, this yields
\begin{align}
	\begin{split}
	&P\big(\eta-\xi \notin B_2, \xi \leq \eta, \xi \in B_1\big) \\
	&\leq P\bigg( \eta \notin \bigg[\frac{\overline{Z^2}}{1+\zeta_n},\frac{\overline{Z^2}}{1-\zeta_n} \bigg]\bigg) \\
	&= P\bigg( \mN(0,1) \notin \bigg[-\frac{\sqrt{n_2}\zeta_n \overline{Z^2}}{\sqrt{2}(\overline{\mu_0^2}+\sigma_0^2)(1+\zeta_n)},\frac{\sqrt{n_2}\zeta_n\overline{Z^2}}{\sqrt{2}(\overline{\mu_0^2}+\sigma_0^2)(1-\zeta_n)} \bigg]\bigg) \\
	&\leq 2P\bigg( \mN(0,1) > \frac{\sqrt{n_2}\zeta_n }{4\sqrt{2}} \bigg) \\
	&\leq \frac{8}{\sqrt{\pi n_2}\zeta_n} e^{-\frac{n_2\zeta_n^2}{64}}.
	\end{split}\label{eq.v4}
\end{align}
To prove \eqref{eq.v1}, we bound 
\begin{align*}
	P\big(\xi \notin B_1\big|(0\leq \xi \leq \eta)\big)\leq  \frac{P(\xi \notin B_1)}{P(0\leq \xi \leq \eta)}
\end{align*}
and 
\begin{align*}
	P\big(\eta-\xi \notin B_2\big|(0\leq \xi \leq \eta)\big)
	\leq \frac{P(\eta-\xi \notin B_2, \xi \in B_1,0\leq \xi \leq \eta)+P(\xi \notin B_1)}{P(0\leq \xi \leq \eta)}.
\end{align*}
Now \eqref{eq.v1} (and therefore \eqref{eq.v0}) follow from the inequalities \eqref{eq.v2}, \eqref{eq.v3}, \eqref{eq.v4} and the definition of $\delta_n.$ This completes the proof of $(v).$
	
	{\em Proof of (vi):} Recall the definitions of the densities
	\begin{align*}
	\pi_\infty(\sigma^2|Y,Z) &\propto \mathbf{1}(\sigma^2 \geq 0) \exp\Big(-\frac{n_1}{4\sigma_0^4} (\sigma^2 - \overline{Y^2})^2\Big) \bigg( 1- \Phi\Big(\frac{\sqrt{n_2}(\sigma^2 -\overline{Z^2})}{\sqrt{2}(\sigma_0^2 +\overline{\mu_0^2})}\Big) \bigg) , \\
	\widetilde{\pi}_\infty(\sigma^2|Y) &\propto \mathbf{1}(\sigma^2 \geq 0) \exp\Big(-\frac{n_1}{4\sigma_0^4} (\sigma^2 - \overline{Y^2})^2\Big) ,
	\end{align*}
	and let
	\begin{align*}
	\pi_{\infty,B_1}(\sigma^2|Y,Z) &\propto \pi_\infty(\sigma^2|Y,Z) \mathbf{1}(\sigma^2 \in B_1), \\
	\widetilde{\pi}_{\infty,B_1}(\sigma^2|Y) &\propto \widetilde{\pi}_\infty(\sigma^2|Y) \mathbf{1}(\sigma^2 \in B_1),
	\end{align*}
	be their localised versions on $B_1$. It is enough to show that, on $A_n,$
	\begin{align}
	\sup_{\sigma_0^2 \in K, \mu_i^0 \in K', \forall i} \big\| \Pi_\infty(\cdot |Y,Z) -\Pi_{\infty,B_1}(\cdot |Y,Z) \big\|_{\TV} \xrightarrow{n\to\infty}0, \label{eq.toprove(vi)-1} \\
	\sup_{\sigma_0^2 \in K, \mu_i^0 \in K', \forall i} \big\| \widetilde{\Pi}_\infty(\cdot |Y) -\widetilde{\Pi}_{\infty,B_1}(\cdot |Y) \big\|_{\TV} \xrightarrow{n\to\infty}0, \label{eq.toprove(vi)} \\
	\sup_{\sigma_0^2 \in K, \mu_i^0 \in K', \forall i} \big\| \Pi_{\infty,B_1}(\cdot |Y,Z) -\widetilde{\Pi}_{\infty,B_1}(\cdot |Y) \big\|_{\TV} \xrightarrow{n\to\infty}0. \label{eq.toprove(vi)+1}
	\end{align}
	For \eqref{eq.toprove(vi)-1}, we apply \eqref{eq.TV_cond_bd} and the fact that $\Pi_\infty(\cdot|Y,Z)$ is the marginal distribution of $\Pi_3(\cdot|Y,Z),$ finding
	\begin{align*}
	\big\| \Pi_\infty(\cdot |Y,Z) -\Pi_{\infty,B_1}(\cdot |Y,Z) \big\|_{\TV} &\leq 2\Pi_\infty(B_1^c |Y,Z)=2\Pi_3(B_1^c |Y,Z).
	\end{align*}
In $(v)$ we proved that the right hand side converges to zero uniformly over $\sigma_0^2 \in K, \mu_i^0 \in K', \forall i.$
For \eqref{eq.toprove(vi)}, we argue similarly, using that 
	\begin{align*}
		\big\| \widetilde{\Pi}_\infty(\cdot |Y) -\widetilde{\Pi}_{\infty,B_1}(\cdot |Y) \big\|_{\TV} &\leq
		2\widetilde \Pi_\infty(B_1^c |Y)=2P(\xi \notin B_1),
	\end{align*} 
with $\xi \sim 	\mN(\overline{Y^2},2\sigma_0^4/n_1).$ Using \eqref{eq.v3}, we see that the right hand side converges to zero, uniformly over $\sigma_0^2 \in K, \mu_i^0 \in K', \forall i.$
	
For \eqref{eq.toprove(vi)+1}, we apply Lemma \ref{lem.E3_lem}. On $A_n,$ the likelihood ratio of $\Pi_{\infty,B_1}(\cdot|Y,Z)$ and $\widetilde{\Pi}_{\infty,B_1}(\cdot|Y)$ is given by
	\begin{align*}
	h(\sigma^2|Y,Z) :=\Bigg(1 - \Phi\bigg(\frac{\sqrt{n_2}(\sigma^2-\overline{Z^2})}{\sqrt{2}(\overline{\mu_0^2}+\sigma_0^2)}\bigg)\Bigg)\Ind(\sigma^2\in B_1).
	\end{align*}
On $A_n,$ 
\begin{align*}
	\sup_{\sigma^2\in B_1} \sigma^2-\overline{Z^2}=\frac{\overline{Y^2}}{1-\zeta_n}-\overline{Z^2}
	&\leq 
	\frac{\sigma_0^2(1+\delta_n)}{1-\zeta_n}-\overline{\mu_0^2}-\sigma_0^2(1-\delta_n).
\end{align*}
Uniformly over $\sigma_0^2 \in K$ and $\inf_{\mu_i^0\in K'} |\mu_i^0|^2 \gg \zeta_n,$ the right hand side can be further upper bounded by $-\overline{\mu_0^2}/2$ for sufficiently large $n.$ Thus,
	\begin{align*}
	|h(\sigma^2|Y,Z)-1| &= P\bigg(\mN(0,1) \leq \frac{\sqrt{n_2}(\sigma^2-\overline{Z^2})}{\sqrt{2}(\overline{\mu_0^2}+\sigma_0^2)}\bigg) \\
	&\leq P\bigg(\mN(0,1) \geq \frac{\sqrt{n_2}\overline{\mu_0^2}}{2\sqrt{2}(\overline{\mu_0^2}+\sigma_0^2)}\bigg).
	\end{align*}
	Since $n\overline{\mu_0^2} \gg n\zeta_n \to\infty$ for $n\to\infty,$
	\begin{align*}
	\sup_{\sigma_0^2 \in K, \mu_i^0 \in K', \forall i} P\bigg(\mN(0,1) \geq \frac{\sqrt{n_2}\overline{\mu_0^2}}{2\sqrt{2}(\overline{\mu_0^2}+\sigma_0^2)}\bigg) \xrightarrow{n\to\infty}0.
	\end{align*}
	This concludes the proof of \textit{(vi)}. 
\end{proof}

\begin{proof}[Proof of Theorem \ref{thm.BvM}]
	We insert $1=\mathbf{1}((Y,Z)\in A_n)+\mathbf{1}((Y,Z)\notin A_n)$ in the expectation. Since the total variation distance of probability measures is bounded, the result follows from Proposition \ref{prop.preudo_post_contr}. 
\end{proof}

\begin{proof}[Proof of Corollary \ref{cor.post_conc}]
Recall that the posterior is the marginal distribution of $\widetilde \Pi(\cdot |Y,Z)$ with respect to $\sigma^2.$ By Proposition \ref{prop.preudo_post_contr} (ii), we have that 
\begin{align*}
	\sup_{\sigma_0^2 \in K, \mu_i^0 \in K', \forall i} \, \Pi\bigg(\sigma^2 \notin \bigg[\frac{\overline{Y^2}}{1+\zeta_n}, \frac{\overline{Y^2}}{1-\zeta_n}\bigg] \bigg| Y,Z\bigg)\mathbf{1}\big((Y,Z) \in A_n\big) \rightarrow 0.
\end{align*}
Using that on $A_n,$ $\sigma_0^2(1-\delta_n) \leq \overline{Y^2} \leq \sigma_0^2(1+\delta_n),$ and $\delta_n=C^{-1}\zeta_n=O(\sqrt{\log n/n}),$ we obtain
\begin{align*}
	\sup_{\sigma_0^2 \in K, \mu_i^0 \in K', \forall i} \, \Pi\bigg(\bigg|\frac{\sigma^2}{\sigma_0^2}-1\bigg|\geq M\sqrt{\frac{\log n}{n}} \bigg| Y,Z\bigg)\mathbf{1}\big((Y,Z) \in A_n\big) \rightarrow 0
\end{align*}
for a constant $M=M(\alpha)$ that is chosen to be sufficiently large. The claim follows by splitting the expected posterior, inserting $1=\mathbf{1}((Y,Z)\in A_n)+\mathbf{1}((Y,Z)\notin A_n)$ in the expectation and using Proposition \ref{prop.preudo_post_contr} (i).
\end{proof}

\begin{proof}[Proof of Lemma \ref{lem.altern_repr_limit2}]
To prove the result, we derive an expression for the joint density of $(\xi,\eta-\xi) \big | (0 \leq \xi\leq \eta).$ Observe that
\begin{align*}
	P\big(\xi \leq s, \eta-\xi \leq t \big | (0 \leq \xi\leq \eta) \big)
	&= \frac{P(\xi \leq s, \eta-\xi \leq t, 0 \leq \xi\leq \eta)}{P(0 \leq \xi\leq \eta)} \\
	&\propto P\big((\eta-t)\vee 0 \leq \xi \leq \eta \wedge s\big).
\end{align*}
The right hand side is zero if $s\leq 0.$ Suppose now that $0\leq s \leq t.$ Conditioning on $\eta,$ the right hand side can be rewritten as
\begin{align*}
	=&\int_0^s P\big(0 \leq \xi \leq u\big) f_\eta(u) du
	+\int_s^t P\big(0 \leq \xi \leq s\big) f_\eta(u) du \\
	&+\int_t^{t+s} P\big(u-t \leq \xi \leq s\big) f_\eta(u) du.
\end{align*} 
Taking derivatives $\partial_s\partial_t,$ the density of $(\xi,\eta-\xi) \big | (0 \leq \xi\leq \eta)$ at point $(s,t)$ equals up to a multiplicative constant $f_\xi(s)f_\eta(t+s).$ Which completes the proof for the case $0\leq s \leq t.$ 

The case $0\leq t\leq s$ is similar and the proof for this case therefore omitted. 

Since the posterior limit distribution is the marginal over the first component of the joint distribution in \eqref{eq.joint_post_to_marg}, it must coincide with the distribution of $\xi \big | (0 \leq \xi\leq \eta).$
\end{proof}

\section*{Acknowledgment}
We are grateful to an anonymous referee and the AE for many helpful suggestions resulting in a major improvement of the article. The research has been supported by an NWO TOP grant.


\begin{thebibliography}{}


\bibitem{Az1985}
Azzalini, A. (1985)
A class of distributions which includes the normal ones. \textit{Scand. J. Statist.},
\textbf{12 (2)}, 171--178.
\MR{MR808153}

\bibitem{Az1996}
Azzalini, A. and Dalla Valle, A. (1996)
The multivariate skew-normal distribution. \textit{Biometrika},
\textbf{83 (4)}, 715--726.
\MR{MR1440039}
	
\bibitem{BK12}
Bickel, P. J. and Kleijn J. K. (2012)
The semiparametric Bernstein-von Mises theorem. \textit{Ann. Statist.},
\textbf{40 (1)}, 206--237.
\MR{MR3013185}

\bibitem{MR2471287}
Castillo, I. (2008)
Lower bounds for posterior rates with Gaussian process
priors. \textit{Electron. J. Stat.},
\textbf{2}, 1281--1299.
\MR{MR2471287}

\bibitem{MR2875753}
Castillo, I. (2012)
A semiparametric Bernstein–von Mises theorem for Gaussian process priors. \textit{Probab. Theory Related Fields},
\textbf{152}, 53--99.
\MR{MR2875753}

\bibitem{MR3021557}
Castillo, I. (2012)
Semiparametric Bernstein–von Mises theorem and bias, illustrated with Gaussian process priors. \textit{Sankhya A},
\textbf{74 (2)}, 194--221.
\MR{MR3021557}

\bibitem{MR3405597}
Castillo, I. and Rousseau. J. (2015)
A Bernstein–von Mises theorem for smooth functionals in semiparametric models. \textit{Ann. Statist.},
\textbf{43 (6)}, 2353--2383.
\MR{MR3405597}

\bibitem{MR3059077}
Castillo, I. and van der Vaart, A. (2012)
Lower bounds for posterior rates with Gaussian process
priors. \textit{Ann. Statist.},
\textbf{40 (4)}, 2069--2101.
\MR{MR3059077}

\bibitem{MR3375874}
Castillo, I., Schmidt-Hieber J. and van der Vaart, A. (2012)
Bayesian linear regression with sparse priors. \textit{Ann. Statist.},
\textbf{43 (5)}, 1986--2018.
\MR{MR3375874}

\bibitem{CM18}
Castillo, I. and Mismer, R. (2018)
Empirical Bayes analysis of spike and slab posterior distributions. \textit{ArXiv e-prints},
\textbf{40 (1)}, 206--237.
\url{https://arxiv.org/abs/1801.01696}

\bibitem{chernozhukov2004}
Chernozhukov, V. and Hong, H. (2004)
Likelihood estimation and inference in a class of nonregular econometric models. \textit{Econometrica},
\textbf{72 (5)}, 1445--1480.
\MR{MR2077489}

\bibitem{MR893334}
Cox, D. R. and Reid, N. (1987)
Parameter orthogonality and approximate conditional inference. \textit{J. Roy. Statist. Soc. Ser. B},
\textbf{49 (1)}, 1--39.
\MR{MR893334}

\bibitem{MR1224410}
Cox, D. R. and Reid, N. (1993)
A note on the calculation of adjusted profile likelihood. \textit{J. Roy. Statist. Soc. Ser. B},
\textbf{55 (2)}, 467--471.
\MR{MR1224410}

\bibitem{MR3020419}
de Jonge, R. and van Zanten, H. (2013)
Semiparametric Bernstein-von Mises for the error standard deviation. \textit{Electron. J. Stat.},
\textbf{7}, 217--243.
\MR{MR3020419}


\bibitem{ghosal1995}
Ghosal, S. and Samanta, T. (1995)
Asymptotic behaviour of Bayes estimates and posterior distributions in multiparameter nonregular cases. \textit{Mathematical Methods of Statistics},
\textbf{4 (4)}, 361--388.
\MR{MR1372011}

\bibitem{GhosalvdVaart2017}
Ghosal, S. and van der Vaart, A. (2017)
Fundamentals of Nonparametric Bayesian Inference. \textit{Cambridge Series in Statistical and Probabilistic Mathematics}, Cambridge University Press, Cambridge. \MR{MR3587782}


\bibitem{G41}
Gordon, R. D. (1941)
Values of Mills' ratio of area to bounding ordinate and of the normal probability integral for large values of the argument. \textit{Ann. Math. Statist.},
\textbf{12 (3)}, 364--366.
\MR{MR0005558}

\bibitem{MR76206}
Hewitt, E., Savage, L. J. (1955)
Symmetric measures on Cartesian products. \textit{Trans. Amer. Math. Soc.},
\textbf{80 (1)}, 470--501.
\MR{MR76206}

\bibitem{HRSH15}
Hoffmann, M., Rousseau, J. and Schmidt-Hieber, J. (2015)
On adaptive posterior concentration rates. \textit{Ann. Statist.},
\textbf{43 (5)}, 2259--2295.
\MR{MR3396985}

\bibitem{JO15}
Johnstone, I. M. and Onatski, A. (2015)
Testing in high-dimensional spiked models. \textit{ArXiv e-prints}.
\url{https://arxiv.org/abs/1509.07269}

\bibitem{KW56}
Kiefer, J. and Wolfowitz, J. (1956)
Consistency of the maximum likelihood estimator in the presence of infinitely many incidental parameters. \textit{Ann. Math. Statist.}, \textbf{27 (4)}, 887--906.
\MR{MR0086464}

\bibitem{LC86}
Le Cam, L. (1986)
Asymptotic methods in statistical decision theory. \textit{Springer Series in Statistics}, Springer-Verlag, New York.
\MR{MR0856411}

\bibitem{MR1064420}
McCullagh, P. and Tibshirani, R. (1990)
A simple method for the adjustment of profile likelihoods. \textit{J. Roy. Statist. Soc. Ser. B}, \textbf{52 (2)}, 325--344.
\MR{MR1064420}

\bibitem{Meijer2012}
Meijer, E., Rohwedderm S. and Wansbeek T. (2012)
Measurement error in earnings data: using a mixture model approach to combine survey and register data. \textit{Journal of Business \& Economic Statistics},
\textbf{30 (2)}, 191--201.
\MR{MR2934935}

\bibitem{NS48}
Neyman, J. and Scott, E. L. (1948)
Consistent estimates based on partially consistent observations. \textit{Econometrica},
\textbf{16 (1)}, 1--32.
\MR{MR0025113}

\bibitem{RSH17}
Reiss, M. and Schmidt-Hieber, J. (2017)
Nonparametric Bayesian analysis of the compound Poisson prior for support boundary recovery. To appear in Annals of Statistics. \textit{ArXiv e-prints},
\url{https://arxiv.org/abs/1809.04140}

\bibitem{MR1947282}
Shen, X. (2002)
Asymptotic normality of semiparametric and nonparametric posterior distributions. \textit{J. Amer. Statist. Assoc.},
\textbf{97}, 222--235.
\MR{MR1947282}

\bibitem{sweeting}
Sweeting, T.J. (1987)
Discussion of "Parameter orthogonality and approximate conditional inference". \textit{J. Roy. Statist. Soc. Ser. B},
\textbf{49 (1)}, 20--21.\textit{}

\bibitem{MR3486423}
van der Pas, S. L., Salomond, J. B. and Schmidt-Hieber, J. (2016)
Conditions for posterior contraction in the sparse normal
means problem. \textit{Electron. J. Stat.},
\textbf{10 (1)}, 976--1000.
\MR{MR3486423}










\end{thebibliography}
\end{document}